\newtheorem{theorem}{Theorem}[chapter]
\newtheorem{lemma}{Lemma}[chapter]
\newtheorem{corollary}{Corollary}[chapter]
\newtheorem{assumption}{Assumption}[chapter]
\newtheorem{remark}{Remark}[chapter]
\newcolumntype{C}{>{\centering\arraybackslash}X}
\newcolumntype{L}{>{\raggedright\arraybackslash}X}
\definecolor{linkColor}{RGB}{0, 0, 9}
\newcommand{\citep}{\parencite}
\newcommand{\citet}{\textcite}
\renewenvironment{abstract}
{\centerline{\large\textbf{Abstract}}\vspace{0.7ex}%
  \bgroup\leftskip 20pt\rightskip 20pt\small\noindent\ignorespaces}%
{\par\egroup\vskip 0.25ex}
\newenvironment{keywords}
{\bgroup\leftskip 20pt\rightskip 20pt \small\noindent{\textbf{Keywords:}} }%
{\par\egroup\vskip 0.25ex}
\newcommand{\mc}{\mathcal}
\newcommand{\mb}{\mathbb}
\newcommand{\tr}[1]{{\textnormal{tr}}\left(#1\right)}
\newcommand\blfootnote[1]{%
  \begingroup
  \renewcommand\thefootnote{}\footnote{#1}%
  \addtocounter{footnote}{-1}%
  \endgroup
}
\let\originalsection\section
\let\originalchapter\chapter
\renewcommand{\subsection}{\originalsection} 
\renewcommand{\section}{\originalchapter}
\author{Peter Schmitt-F\"orster \and Tobias Sutter}
\date{
    \small{
        Department of Computer Science, University of Konstanz\\
        \{peter.schmitt-foerster,tobias.sutter\}@uni-konstanz.de\\
    }
}
\title{\textbf{Regularized Q-learning through Robust Averaging}}
\begin{document}

\maketitle
\blfootnote{This work was supported by the DFG in the Cluster of Excellence EXC 2117 “Centre for the Advanced Study of Collective Behaviour” (Project-ID 390829875).}

\begin{abstract}
    We propose a new Q-learning variant, called \textit{2RA Q-learning}, that addresses some weaknesses of existing Q-learning methods in a principled manner. 
    One such weakness is an underlying estimation bias which cannot be controlled and often results in poor performance. We propose a distributionally robust estimator for the maximum expected value term, which allows us to precisely control the level of estimation bias introduced. 
    The distributionally robust estimator admits a closed-form solution such that the proposed algorithm has a computational cost per iteration comparable to Watkins' Q-learning. 
    For the tabular case, we show that 2RA Q-learning converges to the optimal policy and analyze its asymptotic mean-squared error.
    Lastly, we conduct numerical experiments for various settings, which corroborate our theoretical findings and indicate that 2RA Q-learning often performs better than existing methods.
\end{abstract}
\begin{keywords}
    Reinforcement Learning, Q-Learning, Estimation Bias, Regularization, Distributionally Robust Optimization 
\end{keywords}

\section{Introduction}\label{sec:intro}

The optimal policy of a Markov Decision Process (MDP) is characterized via the dynamic programming equations introduced by \citet{Bellman:DynamicProgramming-57}.
While these dynamic programming equations critically depend on the underlying model, model-free reinforcement learning (RL) aims to learn these equations by interacting with the environment without any knowledge of the underlying model \citep{ref:BerTsi-96,ref:Sutton-1998,ref:book:meyn-22}.
There are two fundamentally different notions of interacting with the unknown environment. The first one is referred to as the synchronous setting, which assumes sample access to a generative model (or simulator), where the estimates are updated simultaneously across all state-action pairs in every iteration step. 
The second concept concerns an asynchronous setting, where one only has access to a single trajectory of data generated under some fixed policy. 
A learning algorithm then updates its estimate of a single state-action pair using one state transition from the sampled trajectory in every step. 
In this paper, we focus on the asynchronous setting, which is the considerably more challenging task than
the synchronous setting due to the Markovian nature of its sampling process.

One of the most popular RL algorithms is Q-learning \citep{ref:Watkins-PhD-89, ref:Watkins-92}, which iteratively learns the value function and hence the corresponding optimal policy of an MDP with unknown transition kernel. 
When designing an RL algorithm, there are various desirable properties such an algorithm should have, including (i) convergence to an optimal policy, (ii) efficient computation, and (iii) ``good" performance of the learned policy after finitely many iterations. 
Watkins' Q-learning is known to converge to the optimal policy under relatively mild conditions \citep{ref:Tsitsiklis-94} and a finite-time analysis is also available \citep{Eyal-01,ref:Srikant-12,ref:Qu-20}. 
Moreover, its simple update rule requires only one single maximization over the action space per step. 
The simplicity of Watkins' Q-learning, however, comes at the cost of introducing an overestimation bias \citep{ref:Thrun-93,ref:Hasselt-10}, which can severely impede the quality of the learned policy \citep{ref:Szita-08,ref:strehl-09,ref:Thrun-93,ref:Hasselt-10}.
It has been experimentally demonstrated that both, overestimation and underestimation bias, may improve learning performance, depending on the environment at hand (see \citet[Chapter~6.7]{ref:Sutton-1998} and \citet{ref:White-20} for a detailed explanation). Therefore, deriving a Q-learning method equipped with the possibility to precisely control the (over- and under-)estimation bias is desirable.

\paragraph{Related Work.} 

In the last decade, several Q-learning variants have been proposed to improve the weakness of Watkins' Q-learning while aiming to admit the desirable properties (i)-(iii). 
We discuss approaches that are most relevant to our work.
Double Q-learning \citep{ref:Hasselt-10} mitigates the overestimation bias of Watkins' Q-learning by introducing a double estimator for the maximum expected value term. 
While Double Q-learning is known to converge to the optimal policy and has a similar computational cost per iteration to Q-learning, it, unfortunately, introduces an underestimation bias which, depending on the environment considered, can be equally undesirable as the overestimation bias of Watkins' Q-learning \citep{ref:White-20}. 

Maxmin Q-learning \citep{ref:White-20} works with $N$ state-action estimates, where $N$ denotes a parameter, and chooses the smallest estimate to select the maximum action. 
Maxmin Q-learning allows to control the estimation bias via the parameter $N$. 
However, it generally requires a large value of $N$ to remove the overestimation bias. 
Conceptually, Maxmin Q-learning is related to our approach, where we select a maximum action based on the average current Q-function and then consider a worst-case ball around this average value. 
Similarly, REDQ \citep{redQ-21} updates $N$ Q-functions based on a max-action, min-function step over a sampled subset of size $M$ out of the $N$ Q-functions and allows to control the over- and underestimation bias. 
It further incorporates multiple randomized update steps in each iteration which results in good sample efficiency.
REDQ performs well in complicated non-tabular settings (including continuous state/action spaces). 
In the tabular setting, Maxmin Q-learning and REDQ are equipped with an asymptotic convergence proof.
Averaged-DQN \citep{ref:average:Q-17} is a simple extension to the Deep Q-learning (DQN) algorithm \citep{mnih2015humanlevel}, based on averaging previously learned Q-value estimates, which leads to a more stable training procedure and typically results in an improved performance by reducing the approximation error variance in the target values. 
Averaged-DQN and other regularized variants of DQN, such as Munchausen reinforcement learning \citep{ref:Vieillard-20}, use a deep learning architecture and are not equipped with any theoretical guarantees about convergence or quality of the learned policy \citep{ref:Prashant-20}.

In general, averaging in Q-learning is a well-known variance reduction method.
A specific form of variance-reduced Q-learning is presented in \citet{ref:wainwright2019variance}, where it is shown that the presented algorithm has minimax optimal sample complexity.
Regularized Q-learning \citep{ref:reg:q:learning-22} studies a modified Q-learning algorithm that converges when linear function approximation is used.
It is shown that simply adding an appropriate regularization term ensures convergence of the algorithm in settings where the vanilla variant does not converge due to the linear function approximation used.
A slightly different objective, when modifying Q-learning schemes, is to robustify them against environment shifts, i.e., settings where the environment, in which the policy is trained, is different from the environment in which the policy will be deployed. 
A popular approach is to consider a distributionally robust Q-learning model, where the resulting Q-function converges to the optimal Q-value that is robust against small shifts in the environment, see \citet{ref:Liu-22} for KL-based ambiguity sets and for distributionally robust formulations using the Wasserstein distance \citet{ref:Neufeld-22}.
The recent paper from \citet{ref:Liu-22} presents a distributionally robust Q-learning methodology, where the resulting Q-function converges to the optimal Q-value that is robust against small shifts in the environment. 
\citet{ref:Prashant-20}, \citet{ref:book:meyn-22}, \citet{ref:Lu-22}, and \citet{ref:lu2023convex} have introduced a new class of Q-learning algorithms called convex Q-learning which exploit a convex reformulation of the Bellman equation via the well-known linear programming approach to dynamic programming~\citep{hernandez2012discrete,ref:Hernandez-99}.

\paragraph{Contribution.}

In this paper, we introduce a new Q-learning variant called \textit{Regularized Q-learning through Robust Averaging} (2RA), which combines regularization and averaging.
The proposed method has two parameters, $\rho>0$ quantifying the level of robustness/regularization introduced and $N\in\mathbb{N}$, which describes the number of state-action estimates used to form the empirical average. 
Centered around this new Q-learning variant, our main contributions can be summarized as follows:
\begin{itemize}[leftmargin=*]
    \itemsep0.1em 
    \item We present a tractable formulation of the proposed 2RA Q-learning where the computational cost per iteration is comparable to Watkins' Q-learning.
    \item For any choice of $N$ and for any positive sequence of regularization parameters $\{\rho_n\}_{n\in\mathbb{N}}$ such that $\lim_{n\to\infty}\rho_n=0$, we prove that the proposed 2RA Q-learning asymptotically converges to the true $Q$-function, see Theorem~\ref{thm:convergence}. 
    \item We show how the choice of the two parameters $\rho$ and $N$ allow us to control the level of estimation bias in 2RA Q-learning, see Theorem~\ref{thm:estimation:bias}, and show that as $N\to\infty$ our proposed estimation scheme becomes unbiased.
    \item We prove that under certain technical assumptions, the asymptotic mean-squared error of 2RA Q-learning is equal to the asymptotic mean-squared error of Watkins' Q-learning, provided that we choose the learning rate of our method $N$-times larger than that of Watkins' Q-learning, see Theorem~\ref{thm:AMSE}. 
    This theoretical insight allows practitioners to start with an initial guess of the learning rate for the proposed method that is $N$-times larger than that of standard Q-learning.
    \item We demonstrate that the theoretical properties of 2RA can be numerically reproduced in synthetic MDP settings. 
    In more practical experiments from the OpenAI gym suite \citep{gym2016} we show that, even when implementations require deviations from out theoretically required assumptions, 2RA Q-learning has good performance and mostly outperforms other Q-learning variants.
\end{itemize}

\section{Problem Setting}\label{sec:setting}

Consider an MDP given by a six-tuple $\left(\mathcal{S}, \mathcal{A}, P, r, \gamma, s_0\right)$ comprising a finite state space $\mathcal{S}=\{1,\ldots,S\}$, a finite action space $\mathcal{A}=\{1,\ldots,A\}$, a transition kernel $P: \mathcal{S} \times \mathcal{A} \rightarrow \Delta(\mathcal{S})$, a reward-per-stage function $r: \mathcal{S} \times \mathcal{A} \rightarrow \mathbb{R}$,
a discount factor $\gamma\in(0,1)$, and a deterministic initial state $s_0\in\mathcal{S}$. 
Note that $\left(\mathcal{S}, \mathcal{A}, P, r, \gamma,s_0\right)$ describes a controlled discrete-time stochastic system, where the state and the action applied at time $t$ are denoted as random variables $S_t$ and $A_t$, respectively.
If the system is in state $s_t\in\mc S$ at time $t$ and action $a_t\in\mc A$ is applied, then an immediate reward of $r(s_t,a_t)$ is incurred, and the system moves to state $s_{t+1}$ at time $t+1$ with probability $P(s_{t+1}|s_t,a_t)$. 
Thus, $P(\cdot|s_t,a_t)$ represents the distribution of $S_{t+1}$ conditional on $S_t=s_t$ and $A_t=a_t$. 
It is often convenient to represent the transition kernel as a matrix $P\in\mathbb{R}^{SA\times S}$.
Actions are usually chosen according to a policy that prescribes a random action at time $t$ depending on the state history up to time $t$ and the action history up to time $t-1$.
Throughout the paper, we restrict attention to stationary Markov policies, which are described by a stochastic kernel $\pi:\mathcal{S}\to\Delta(\mc A)$, that is, $\pi(a_{t}|s_{t})$ denotes the probability of choosing action $a_{t}$ while being in state~$s_{t}$.
We denote by $\Pi$ the space of all stationary Markov policies. 
Given a stationary policy $\pi$ and an initial condition $s_0$, it is well-known that there exists a unique probability measure $\mathbb{P}_{s_0}^\pi$ defined on the canonical sample space $\Omega=(\mathcal{S}\times\mathcal{A})^\infty$ equipped with its power set $\sigma$-field $\mc F=2^{\Omega}$, such that for all $t\in\mb N$ we have (see \citet[Section 2.2]{hernandez2012discrete} for further details)
\begin{align*}
    &\mathbb{P}_{s_0}^\pi(S_0=s_0)=1,\\
    &\mathbb{P}_{s_0}^\pi(S_{t+1}=s_{t+1}|S_t = s_t, A_t=a_t) = P(s_{t+1}|s_t,a_t) \qquad\forall s_t,s_{t+1}\in\mc S,\;a_t\in\mc A, \\
    &\mathbb{P}_{s_0}^\pi(A_{t}=a_{t}|S_{t}=s_{t})= \pi(a_{t}|s_{t})\quad\forall s_{t}\in\mc S,\;a_t\in\mc A.
\end{align*} 
To keep the notation simple, in the following, we denote $\mathbb{P}_{s_0}^\pi$ by $\mathbb{P}$ and the corresponding expectation operator by $\mathbb{E}$.
Then, the ultimate goal is to find an optimal policy $\pi^\star$ which leads to the largest expected infinite-horizon reward, i.e., 
\begin{equation}\label{eq:optimal:policy}
    \pi^\star \in \arg\max_{\pi\in\Pi} \sum_{t=0}^{\infty} \gamma^t \mathbb{E}[r(S_t,A_t)].
\end{equation}
An optimal (deterministic) policy can be alternatively obtained from the optimal Q-function as $\pi^\star(\cdot |s) = \delta_{a^\star}(\cdot)$, where $a^\star \in \arg\max_{a\in\mc A} Q^\star(s,a)$ and the optimal Q-function satisfies the Bellman equation \citep{ref:BerTsi-96}, i.e., $\forall (s, a)\in \mc S \times \mc A$
\begin{equation}\label{eq:Bellman:eq}
    Q^\star(s,a) = r(s,a) + \gamma \sum_{s'\in\mc S} \! P(s'|s,a) \max_{a'\in\mc A} Q^\star (s',a').
\end{equation}
Solving for the Q-function via \eqref{eq:Bellman:eq} requires the knowledge of the underlying transition kernel $P$ and reward function $r$, objects which in reinforcement learning problems generally are not known.

In this work, we focus on the so-called \textit{asynchronous} RL setting, where the Q-function is learned from a single trajectory of data which we assume to be generated from a fixed behavioral policy leading to state-action pairs $\{(S_1,A_1),\dots,(S_n,A_n),\dots\}$.  
The standard asynchronous Q-learning algorithm \citep{ref:Wentao-20} can then be expressed as
\begin{equation}\label{eq:Q:learning:watkins}
    \begin{aligned}
        &Q_{n+1}(S_n, A_n) = Q_{n}(S_n, A_n) + \alpha_n^{\mathsf{QL}} \big( r(S_n,A_n) + \gamma \max_{a'\in\mc A} Q_{n}(S_{n+1}, a')-Q_{n}(S_n, A_n) \big),
    \end{aligned}
\end{equation}
where $\alpha_n^{\mathsf{QL}} \in(0,1]$ is the learning rate. 
It has been shown \citep{ref:Tsitsiklis-94,ref:Csaba-97,ref:Qu-20,ref:Lee-20} that if each state is updated infinitely often and each action is tried an infinite number of times in each state, convergence to the optimal Q-function can be obtained. 
That is, for a learning rate satisfying $\sum_{n=0}^\infty \alpha_n^{\mathsf{QL}} = \infty$ and $\sum_{n=0}^\infty (\alpha_n^{\mathsf{QL}})^2 <\infty$, Q-learning converges $\mathbb{P}$-almost surely to an optimal solution of the Bellman equation \eqref{eq:Bellman:eq}, i.e., $\lim_{n\to\infty} Q_n = Q^\star$ $\mathbb{P}$-almost surely.
To simplify notation, we introduce the state-action variables $X_n = (S_n,A_n)$ and define $\mc X = \mc S \times \mc A$. As the state space in practice is often large, the Q-functions are commonly approximated via fewer basis functions. 
When interpreting the Q-function as a vector on $\mathbb{R}^{|\mc X|}$, we use 
\begin{equation}\label{eq:q:approx}
    Q^\star \approx \Phi^\top \theta^\star,  \quad \theta^\star\in\mathbb{R}^d, \ \Phi = \left(\phi(s_1,a_1),\dots,\phi(s_{|\mc S|},a_{|\mc A|})\right)\in\mathbb{R}^{d\times |\mc X|},
\end{equation}
where with slight abuse of notation we denote by $\phi(s,a)\in\mathbb{R}^d$ the given feature vectors associated with the pairs $s=s_i$ and $a=a_i$ for $i\in\{1,\dots |\mc X|\}$. 
Clearly, by choosing $d=|\mc X|$ and the canonical feature vectors $\phi(s,a) = \sum_{i=1}^{|\mc X|} e_i \cdot \mathsf{1}_{\{ (s,a) = (s_i,a_i) \}}$ for $i=1,\dots,|\mc X|$, the approximation \eqref{eq:q:approx} is exact, which is referred to as the tabular setting. For our convergence results that only hold in the tabular setting, we will use this representation.
In this linear function approximation formulation, the standard asynchronous Q-learning \eqref{eq:Q:learning:watkins} can be expressed as so-called $Q(0)$-learning \citep{ref:Melo-08, ref:book:meyn-22}, which is given as 
\begin{equation}\label{eq:q-learning}
    \theta_{n+1} = \theta_n + \alpha_n^{\mathsf{QL}}\big( b(X_n) - A_1(X_n)\theta_n + \mc E(X_n,S_{n+1},\theta_n)\big),
\end{equation}
where $b(X_n) = \phi(X_n) r(X_n)$, $A_1(X_n) = \phi(X_n)\phi(X_n)^\top$, $\mc E(X_n,S_{n+1},\theta_n) = \gamma \phi(X_n)\max_{a'\in\mc A} \phi(S_{n+1},a')^\top\theta_n$.
It is well-known \citep{ref:Hasselt-10}, that the term $\mc E$ in the Q-learning \eqref{eq:q-learning} introduces an overestimation bias when compared to \eqref{eq:Bellman:eq}, since
\begin{equation}\label{eq:Jensen:overestimation}
    \max_{a'\in\mc A} \mathbb{E}[\phi(s,a')^\top\theta_n] \leq \mathbb{E}[\max_{a'\in\mc A} \phi(s,a')^\top\theta_n] \ \ \forall s\in\mc S,
\end{equation}
where the expectation is with respect to the random variable $\theta_n$ defined according to \eqref{eq:q-learning} and the inequality is due to Jensen. 
A common method to mitigate this overestimation bias is to modify Q-learning to the so-called Double Q-learning \citep{ref:Hasselt-10}, which using the linear function approximation, can be expressed in the form \eqref{eq:q-learning}, where
\begin{align*}
    &\theta_n = \begin{pmatrix}
    \theta_n^A \\ \theta_n^B
    \end{pmatrix}, 
    \quad    \mc E(X_n,S_{n+1},\theta_n) = \begin{pmatrix}
    \beta_n \gamma \phi(X_n)\phi(S_{n+1},\pi_{\theta_n^A}(S_{n+1}))^\top \theta_n^B \\
    (1-\beta_n) \gamma \phi(X_n)\phi(S_{n+1},\pi_{\theta_n^B}(S_{n+1}))^\top \theta_n^A
    \end{pmatrix}, \\
    &b(X_n) = \begin{pmatrix}
    \beta_n \phi(X_n) r(X_n) \\
    (1-\beta_n) \phi(X_n) r(X_n)
    \end{pmatrix},
    \quad A_1(X_n)  = \begin{pmatrix}
    \beta_n \phi(X_n) \phi(X_n)^\top \! & 0 \\
    0 & \!\!(1-\beta_n) \phi(X_n) \phi(X_n)^\top\!
    \end{pmatrix},
\end{align*}
and $\beta_n$ are i.i.d.~Bernoulli random variables with equal probability, and $\pi_\theta(s) = \arg\max_{a\in\mc A}\phi(s,a)^\top \theta$. 
 While Double Q-learning avoids an overestimation bias, it introduces an underestimation bias  \citep[Lemma~1]{ref:Hasselt-10}, as each component of $\mc E$ satisfies\footnote{Analogously we have $\mathbb{E}[ \phi(s,\pi_{\theta^B_n}(s))^\top\theta^A_n] \leq \max_{a'\in\mc A} \mathbb{E}[\phi(s,a')^\top\theta^B_n]$ for all $s\in\mc S$.}
\begin{equation*}
    \mathbb{E}[ \phi(s,\pi_{\theta^A_n}(s))^\top\theta^B_n] \leq \max_{a'\in\mc A} \mathbb{E}[\phi(s,a')^\top\theta^A_n] \quad \forall  s\in\mc S.
\end{equation*}
It can be directly seen by the Jensen inequality~\eqref{eq:Jensen:overestimation} that in the special case of $\theta^A = \theta^B$ the inequality above becomes an equality.
An inherent difficulty with the overestimation bias of standard Q-learning (resp.~the underestimation bias of Double Q-learning) is that it cannot be controlled, i.e., the level of under-(resp.~over-)estimation bias depending on the problem considered can be significant. 
In the following, we present a Q-learning method where the level of estimation bias can be precisely adjusted via a hyperparameter.

\section{Regularization through Robust Averaging}\label{sec:robust:Q:learning}

We propose the 2RA Q-learning method defined by the update rule
\begin{equation}\label{eq:q-learning:2RA}
        \theta^{(i)}_{n+1} = \theta^{(i)}_n + \alpha_n \beta_n^{(i)}( b(X_n) - A_1(X_n)\theta^{(i)}_n + \mc E_{\rho}(X_n,S_{n+1},\widehat\theta_{N,n})), \quad \text{for }i=1,\dots, N,
\end{equation}
where $\beta_n$ is a generalized i.i.d.~Bernoulli random variable on $\{1,\dots,N\}$ with equal probability for each component $i$, i.e., $\mathbb{P}(\beta_n = e_i) = 1/N$ for all $i=1,\dots,N$, where $e_i$ is the i$^{\text{th}}$ unit vector on $\mathbb{R}^N$ and $\alpha_n$ is the learning rate. 
2RA Q-learning~\eqref{eq:q-learning:2RA} is based on the estimator defined for all $x\in\mc X, s'\in\mc S, \theta\in\mathbb{R}^d$ as
\begin{equation}\label{eq:robust:estimator}
    \mc E_{\rho}(x,s',\theta) = \gamma \phi(x) \max_{a'\in\mc A} \min_{\theta'\in\mc B_\rho(\theta)}\phi(s',a')^\top\theta', 
\end{equation}
where $\rho\geq0$ is a given parameter and the ambiguity (or uncertainty) set $\mc B_\rho(\theta)$ is assumed to be of the form $\mc B_\rho(\theta) =\{\theta'\in\mathbb{R}^d: \|\theta - \theta'\|_2^2 \leq \rho\}$ with its center $\theta$ being the empirical average 
\begin{equation}\label{eq:estimator:proposed:DRO}
    \widehat\theta_{N,n} = \frac{1}{N}\sum_{i=1}^N \theta_n^{(i)}.
\end{equation}
The intuition behind the proposed 2RA Q-learning \eqref{eq:q-learning:2RA} is to mitigate the overestimation bias of Watkins' Q-learning by approximating the term $\max_{a'\in\mathcal{A}}\mathbb{E}_{\mathbb{P}}[\phi(s,a')^\top\theta_n]$, where we consider $\theta_n$ as a $\mathbb{R}^d$-valued random variable distributed according to $\mathbb{P}$, via the distributionally robust model
\begin{equation}\label{eq:DRO:motivation}
    \max_{a'\in\mathcal{A}} \min_{\mathbb{Q}\in\mathbb{B}_\rho(\widehat{\mathbb{P}}_{N,n})}\mathbb{E}_{\mathbb{Q}}[\phi(s,a')^\top\theta_n],
\end{equation}
where $\mathbb{B}_\rho(\widehat{\mathbb{P}}_{N,n})$ is a set of probability measures centered around the empirical distribution $\widehat{\mathbb{P}}_{N,n} = \frac{1}{N}\sum_{i=1}^N \delta_{\theta^{(i)}_n}$.
When considering the ambiguity set $\mathbb{B}_\rho(\widehat{\mathbb{P}}_{N,n})$ as the ball of all distributions that have a fixed diagonal covariance and a 2-Wasserstein distance to $\widehat{\mathbb{P}}_{N,n}$ of at most $\sqrt{\rho}$, then by \citep[Theorem~2]{ref:Nguyen-risk-21} the distributionally robust model~\eqref{eq:DRO:motivation} directly corresponds to our estimator~\eqref{eq:robust:estimator}.  
Running the 2RA Q-learning \eqref{eq:q-learning:2RA} requires an evaluation of the estimator $\mc E_{\rho}(X_n,S_{n+1},\widehat\theta_{N,n})$ given by the optimization problem \eqref{eq:robust:estimator}, which admits a closed form expression.
\begin{lemma}[Estimator computation]\label{lem:computation}
The estimator defined in \eqref{eq:robust:estimator} is equivalently expressed as
    \begin{equation*}
        \mc E_{\rho}(x,s',\theta) = \gamma \phi(x) \max_{a'\in\mc A} \left\{ \phi(s',a')^\top \theta  - \sqrt{\rho} \|\phi(s',a')\|_2 \right\}.
    \end{equation*}
\end{lemma}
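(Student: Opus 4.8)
The plan is to reduce the statement to the elementary fact that minimizing a linear functional over a Euclidean ball has a Cauchy--Schwarz closed form. First I would observe that in \eqref{eq:robust:estimator} the scalar $\gamma$ and the vector $\phi(x)$ do not depend on the optimization variables, and that the outer maximization ranges over the \emph{finite} action set $\mc A$, acting on a quantity computed separately for each action. Hence it suffices to evaluate, for each fixed $a'\in\mc A$, the inner minimization
\[
    g(s',a') := \min_{\theta'\in\mc B_\rho(\theta)} \phi(s',a')^\top\theta'.
\]

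For fixed $s'$ and $a'$, write $c = \phi(s',a')\in\mathbb{R}^d$ and reparametrize the feasible set as $\mc B_\rho(\theta) = \{\theta+u : \|u\|_2\le\sqrt{\rho}\}$ via $\theta' = \theta+u$. The objective then splits as $c^\top\theta' = c^\top\theta + c^\top u$, so that
\[
    g(s',a') = c^\top\theta + \min_{\|u\|_2\le\sqrt{\rho}} c^\top u.
\]
Since $\{u : \|u\|_2\le\sqrt{\rho}\}$ is compact and the objective is continuous, the minimum is attained. By the Cauchy--Schwarz inequality $c^\top u \ge -\|c\|_2\|u\|_2 \ge -\sqrt{\rho}\,\|c\|_2$, with equality for $u = -\sqrt{\rho}\,c/\|c\|_2$ when $c\neq 0$ (and trivially for every feasible $u$ when $c=0$, in which case $\sqrt{\rho}\,\|c\|_2=0$). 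Thus $\min_{\|u\|_2\le\sqrt{\rho}} c^\top u = -\sqrt{\rho}\,\|c\|_2$ and
\[
    g(s',a') = \phi(s',a')^\top\theta - \sqrt{\rho}\,\|\phi(s',a')\|_2.
\]

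Substituting this back and reinstating the prefactor $\gamma\phi(x)$ together with the outer $\max_{a'\in\mc A}$ yields precisely the claimed closed form. I do not anticipate a genuine obstacle: the only points deserving a word of care are the attainment of the inner minimum (guaranteed by compactness of the ball) and the degenerate case $\phi(s',a')=0$, which the formula accommodates automatically; the interchange with the outer maximum is immediate since it is taken over a finite set of per-action values and the inner minimization does not couple across actions.
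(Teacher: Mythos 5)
Your proof is correct and follows essentially the same route as the paper: both reduce the claim to evaluating the inner minimization of a linear functional over the Euclidean ball $\mc B_\rho(\theta)$ in closed form, the paper by citing the ellipsoid formula of \citet[Lemma~9.2]{bertsimas-LPbook} with $H=I$, and you by deriving it directly from Cauchy--Schwarz. Your version is self-contained and, unlike the cited optimizer formula (which divides by $\|\phi(s',a')\|_2$), explicitly covers the degenerate case $\phi(s',a')=0$, which is a small but genuine improvement in rigour.
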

\begin{proof}
    According to \citet[Lemma~9.2]{bertsimas-LPbook}, for any $c,\theta'\in\mathbb{R}^d$ and positive definite matrix $H\in\mathbb{R}^{d\times d}$ the optimization problem
    \begin{equation*}
        \min_{\theta\in\mathbb{R}^d} \{c^\top \theta \ : \ (\theta'-\theta)^\top H^{-1} (\theta'-\theta) \leq \rho\}
    \end{equation*}
    admits a closed-form solution 
    \begin{equation*}
        \theta^\star = \theta' - \sqrt{\frac{\rho}{c^\top H c}}Hc.
    \end{equation*}
    Therefore, by setting $c = \phi(s',a')$ and $H$ to be the identity matrix, an optimizer in \eqref{eq:robust:estimator} is
    \begin{equation*}
        \theta^\star =  \theta' - \frac{\sqrt{\rho}}{\|\phi(s',a')\|_2}\phi(s',a'),
    \end{equation*}
    which completes the proof.
\end{proof}
In the tabular setting 2RA Q-learning~\eqref{eq:q-learning:2RA} even for $N=2$ is different from Double Q-learning \citep{ref:Hasselt-10}. 
However, in the special case where $\rho=0$ and $N=1$, our method collapses to Watkins' Q-learning~\eqref{eq:q-learning}.
In our proposed 2RA Q-learning, we've made a modification to the term $\mathcal{E}(X_n, S_{n+1}, \theta_n)$ from Wattkins Q-learning \eqref{eq:q-learning}. It is now replaced with a regularized version, denoted as $\mathcal{E}_\rho$ based on Lemma~\ref{lem:computation}. This adjustment is combined with the averaging property using $\widehat\theta_{N,n}$.
The regularization term $\sqrt{\rho} \|\phi(s',a')\|_2$ can be interpreted as negative UCB bonus term that discourages exploration, which has been considered for linear MDPs in \cite{jin2019provably}, see also \cite{ref:Qian-19}.

In the remainder of this section, we theoretically investigate 2RA Q-learning~\eqref{eq:q-learning:2RA} and, in particular, study how to choose the two regularization parameters $\rho$, $N$, and the learning rate $\alpha_n$. 
In Section~\ref{sec:asymptotic:convergence}, we show what properties the regularization term $\rho$ should satisfy such that 2RA Q-learning~\eqref{eq:q-learning:2RA} asymptotically converges to the optimal Q-function. 
This convergence is independent of the number $N$ of Q-function estimates. 
Section~\ref{ssec:estimation:bias} shows how the two terms $\rho$ and $N$ can be exploited to control the estimation bias of the presented scheme. 
Finally, Section~\ref{ssec:AMSE} studies the convergence rate via the notion of the asymptotic mean squared error and, in particular, shows how to choose the learning rate $\alpha_n$ as compared to Watkins' Q-learning.

\subsection{Asymptotic Convergence}\label{sec:asymptotic:convergence}

The 2RA Q-learning~\eqref{eq:q-learning:2RA} for the tabular setting actually converges almost surely to the optimal Q-function satisfying the Bellman equation~\eqref{eq:Bellman:eq}, provided the radius $\rho$ is chosen appropriately.
\begin{theorem}[Asymptotic convergence]\label{thm:convergence}
    Consider the tabular setting where $d=|\mc X|$, $\Phi$ is the canonical basis and let $\{\rho_n\}_{n\in\mathbb{N}}$ be a sequence of non-negative numbers such that $\lim_{n\to\infty}\rho_n = 0$. Moreover, assume that
    \begin{enumerate}[label=(\roman*)]
        \item The learning rates satisfy $\alpha_n(s,a)\in(0,1]$, $\sum_{n=0}^\infty \alpha_n(s,a)=\infty$, $\sum_{n=0}^\infty \alpha^2_n(s,a)<\infty$ and $\alpha_n(s,a) = 0$ unless $(s,a) = (S_n,A_n)$.
        \item The reward $r$ is bounded.
    \end{enumerate}
     Then, for any $N\in\mathbb{N}$, 2RA Q-learning~\eqref{eq:q-learning:2RA} converges to the optimal Q-function $Q^\star$, i.e., $\lim_{n\to\infty} \Phi^\top \widehat \theta_{N,n} = Q^\star$ $\mathbb{P}$-almost surely.
\end{theorem}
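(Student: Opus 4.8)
The plan is to recast the coupled system of $N$ estimates as a single asynchronous stochastic approximation scheme for a contraction mapping and then invoke a standard asynchronous stochastic approximation theorem in the spirit of \citet{ref:Tsitsiklis-94,ref:Csaba-97}. First I would use Lemma~\ref{lem:computation} together with the tabular assumption that $\Phi$ is the canonical basis, so that $\|\phi(s',a')\|_2 = 1$ and $\phi(s',a')^\top\theta = (\Phi^\top\theta)(s',a')$. Writing $Q_n^{(i)} = \Phi^\top\theta_n^{(i)}$ and $\widehat Q_{N,n} = \Phi^\top\widehat\theta_{N,n}$, the update \eqref{eq:q-learning:2RA} becomes, for the single index $I_n$ with $\beta_n^{(I_n)}=1$,
\begin{equation*}
Q_{n+1}^{(I_n)}(X_n) = (1-\alpha_n)Q_n^{(I_n)}(X_n) + \alpha_n\Big(r(X_n) + \gamma\max_{a'\in\mc A}\widehat Q_{N,n}(S_{n+1},a') - \gamma\sqrt{\rho_n}\Big),
\end{equation*}
with all other coordinates left unchanged. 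The crucial structural observation is that the bootstrap term uses the \emph{average} $\widehat Q_{N,n}$, whereas the relaxation pulls the \emph{individual} estimate $Q_n^{(I_n)}$ toward this common target. Taking conditional expectations, the expected target equals $(T_{\rho_n}\widehat Q_{N,n})(X_n)$, where $T_{\rho_n}Q = TQ - \gamma\sqrt{\rho_n}\mathbf{1}$ and $T$ is the Bellman optimality operator of \eqref{eq:Bellman:eq}.

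Next I would view the stacked process $Q_n = (Q_n^{(1)},\dots,Q_n^{(N)})$ as an asynchronous stochastic approximation for the map
\begin{equation*}
G(Q^{(1)},\dots,Q^{(N)}) = \big(T\bar Q,\dots,T\bar Q\big), \qquad \bar Q = \frac{1}{N}\sum_{i=1}^N Q^{(i)},
\end{equation*}
perturbed by the vanishing deterministic term $-\gamma\sqrt{\rho_n}\mathbf{1}$. Since $G$ depends on its arguments only through their average and $T$ is a $\gamma$-contraction in $\|\cdot\|_\infty$, the map $G$ is a $\gamma$-contraction in the product max-norm $\max_i\|\cdot\|_\infty$, and its unique fixed point is the diagonal point $(Q^\star,\dots,Q^\star)$ with $Q^\star$ as in \eqref{eq:Bellman:eq}. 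Because $\rho_n\to0$, the additive perturbation is asymptotically negligible and does not move the limit, which is handled by the standard fact that an asymptotically vanishing drift does not affect the limit of a convergent stochastic approximation.

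It then remains to verify the hypotheses of the asynchronous theorem for the joint coordinate set indexed by $(i,s,a)$. The noise is $w_n = \gamma\big(\max_{a'}\widehat Q_{N,n}(S_{n+1},a') - \mathbb{E}[\max_{a'}\widehat Q_{N,n}(S_{n+1},a')\mid\mathcal{F}_n]\big)$ together with the zero-mean component-selection fluctuation $\widehat Q_{N,n}(X_n) - Q_n^{(I_n)}(X_n)$; both are martingale differences, and since $r$ is bounded and $\max_{a'}$ is non-expansive, their conditional second moments grow at most quadratically in $\|Q_n\|$, as required. For the per-coordinate learning rates $\alpha_n(s,a)\,\mathbf{1}\{I_n=i\}$, square-summability is immediate from assumption~(i), while divergence of $\sum_n\alpha_n(s,a)\mathbf{1}\{I_n=i\}$ follows from a martingale argument: the centered sums $\sum_{k\le n}\alpha_k(s,a)(\mathbf{1}\{I_k=i\}-\tfrac1N)$ form an $L^2$-bounded martingale and hence converge, so $\sum_n\alpha_n(s,a)\mathbf{1}\{I_n=i\} = \tfrac1N\sum_n\alpha_n(s,a) + o(1)\to\infty$ almost surely. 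Infinitely-often updating of every $(i,s,a)$ follows likewise from assumption~(i) and the independence of $\{I_n\}$ from the trajectory.

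The main obstacle is the asymmetric coupling between the individual self-correction term and the averaged bootstrap: one cannot analyze the estimates $Q_n^{(i)}$ in isolation, nor reduce directly to a scalar recursion for $\widehat Q_{N,n}$, because the relaxation carries the individual coordinate $Q_n^{(I_n)}$ rather than the average. Treating the whole vector $(Q_n^{(1)},\dots,Q_n^{(N)})$ jointly resolves this, since $G$ inherits the $\gamma$-contraction in the product norm precisely because it factors through the average. With the contraction property, the martingale-noise bound, and the Robbins--Monro conditions in hand, the asynchronous stochastic approximation theorem of \citet{ref:Tsitsiklis-94} (see also \citet{ref:Csaba-97}) yields $Q_n\to(Q^\star,\dots,Q^\star)$ $\mathbb{P}$-almost surely; in particular $\Phi^\top\widehat\theta_{N,n} = \widehat Q_{N,n}\to Q^\star$, which is the claim.
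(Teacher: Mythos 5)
Your proposal is correct, but it takes a genuinely different route from the paper. The paper fixes a single component $i$, writes the update for $Q_n^{(i)}$ as Watkins' recursion perturbed by the coupling term $\gamma\bigl(\max_{a'}\widehat Q_{N,n}-\max_{a'}Q_n^{(i)}-\sqrt{\rho_n}\bigr)$, proves this perturbation vanishes by deriving the explicit decay $\mathbb{E}[\delta^{(i)}_{n+1}\,|\,P_n]=(1-\tfrac{\alpha_n}{N})\delta^{(i)}_n$ for $\delta_n^{(i)}=Q_n^{(i)}-\widehat Q_{N,n}$, and then invokes the stochastic-approximation lemma of \citet{ref:Csaba-00} (Lemma~\ref{lemma:SA:result}), whose condition \ref{item:3:SA} explicitly tolerates a vanishing additive term $c_n$; iterate boundedness, needed for the variance condition, is established by a separate contradiction argument against Watkins' Q-learning. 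You instead lift the whole system to the product space and observe that the stacked update is asynchronous stochastic approximation for $G(Q^{(1)},\dots,Q^{(N)})=(T\bar Q,\dots,T\bar Q)$, which is a $\gamma$-contraction in the product sup-norm with the diagonal fixed point $(Q^\star,\dots,Q^\star)$ — a clean observation the paper does not make. This buys you two things: the decoupling $Q_n^{(i)}-\widehat Q_{N,n}\to 0$ falls out as a consequence rather than being a prerequisite, and boundedness of the iterates comes for free from the max-norm contraction machinery of \citet{ref:Tsitsiklis-94}, so no separate contradiction argument is needed. Your martingale argument for $\sum_n\alpha_n(s,a)\mathbf{1}\{I_n=i\}=\infty$ also makes explicit a per-component Robbins--Monro verification that the paper passes over with ``clearly hold.'' Two small remarks: the ``component-selection fluctuation'' $\widehat Q_{N,n}(X_n)-Q_n^{(I_n)}(X_n)$ you list as a noise source does not actually arise, since in the product-space formulation the relaxation acts on exactly the coordinate being updated; and the vanishing drift $-\gamma\sqrt{\rho_n}$ is most cleanly absorbed by noting that $T_{\rho_n}$ is itself a $\gamma$-contraction whose fixed point is within $\gamma\sqrt{\rho_n}/(1-\gamma)$ of $Q^\star$, or by using a version of the asynchronous theorem with a vanishing bias term (as the paper's Lemma~\ref{lemma:SA:result} provides). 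Neither point is a gap.
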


\noindent Note that $\Phi^\top \widehat \theta_{N,n}$ is our learned 2RA Q-function under the canonical basis describing the tabular setting.
The proof of Theorem~\ref{thm:convergence} is based on the following technical stochastic approximation result. 
We denote by $\|\cdot\|_w$ a weighted maximum norm with weight $w = (w_1,\dots,w_d)$, $w_i>0$. If $x\in\mathbb{R}^d$, then $\|x\|_w = \max_i \frac{|x_i|}{w_i}$.
\begin{lemma}[{\citet[Lemma~1]{ref:Csaba-00}}]\label{lemma:SA:result}
    Consider a stochastic process $\{(\alpha_n,\Delta_n,F_n)\}_{n\geq 0}$, where $\alpha_n,\Delta_n,F_n:\mathcal{X}\to\mathbb{R}$ satisfy the equations
    \begin{equation}\label{eq:SA:eq}
        \Delta_{n+1}(x) = (1-\alpha_n(x))\Delta_n(x) + \alpha_n(x)F_n(x), \quad x\in \mathcal{X}, \ n=0,1,\dots 
    \end{equation}
    Let $P_n$ be a sequence of increasing $\sigma$-fields such that $\alpha_0$ and $\Delta_0$ are $P_0$-measurable and $\alpha_n,\Delta_n$ and $F_{n-1}$ are $P_n$-measurable for $n=1,2,\dots$. Assume the following hold
    \begin{enumerate}[label=(\roman*)]
        \item \label{item:1:SA} the set $\mathcal{X}$ is finite;
        \item \label{item:2:SA} $\alpha_n(x)\in(0,1] $, $\sum_{n=1}^\infty \alpha_n(x) = \infty$, $\sum_{n=1}^\infty \alpha^2_n(x) < \infty$ almost surely;
        \item \label{item:3:SA} $\|\mathbb{E}[F_n(\cdot)|P_n]\|_w\leq \kappa \|\Delta_n\|_w + c_n$, where $\kappa\in[0,1)$ and $c_n$ converges to zero almost surely;
        \item \label{item:4:SA} $\mathsf{Var}(F_n(x)|P_n)\leq K(1 + \|\Delta_n\|_w)^2$, where $K$ is some constant.
    \end{enumerate}
    Then, $\Delta_n$ converges to zero almost surely as $n\to\infty$.
\end{lemma}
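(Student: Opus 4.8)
The plan is to treat this as a classical stochastic-approximation statement---a contraction-mapping generalization of the Robbins--Monro theorem---and to prove it by splitting the update into a drift part and a martingale-noise part, controlling each separately, and then squeezing $\Delta_n$ to zero by an inductive shrinking-bounds argument that exploits the strict contraction factor $\kappa<1$. Concretely, write $g_n := \mathbb{E}[F_n(\cdot)\mid P_n]$ and $w_n := F_n - g_n$, so that $\mathbb{E}[w_n(x)\mid P_n]=0$; by linearity of the recursion~\eqref{eq:SA:eq} one then decomposes $\Delta_n = X_n + W_n$, where $X_{n+1}(x) = (1-\alpha_n(x))X_n(x)+\alpha_n(x)g_n(x)$ with $X_0=\Delta_0$, and $W_{n+1}(x) = (1-\alpha_n(x))W_n(x)+\alpha_n(x)w_n(x)$ with $W_0=0$. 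The process $W_n$ carries all randomness beyond the conditional mean, while $X_n$ is driven by the drift, whose weighted-norm magnitude is controlled by assumption~\ref{item:3:SA}.

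First I would establish a coordinatewise noise lemma: for each fixed $x\in\mathcal{X}$, a scalar recursion $W_{n+1}(x)=(1-\alpha_n(x))W_n(x)+\alpha_n(x)w_n(x)$ driven by a martingale difference of \emph{uniformly bounded} conditional variance converges to zero $\mathbb{P}$-almost surely under the step-size conditions~\ref{item:2:SA}. The standard argument uses that $W_n(x)$ and $\alpha_n(x)$ are $P_n$-measurable while $\mathbb{E}[w_n(x)\mid P_n]=0$, so that $\mathbb{E}[W_{n+1}(x)^2\mid P_n]\le (1-\alpha_n(x))^2 W_n(x)^2 + \alpha_n(x)^2 C$, and then concludes by a Robbins--Siegmund supermartingale argument invoking $\sum_n\alpha_n^2(x)<\infty$ and $\sum_n\alpha_n(x)=\infty$. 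Since $\mathcal{X}$ is finite by~\ref{item:1:SA}, a finite union gives $\|W_n\|_w\to 0$ almost surely.

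The difficulty, and the real crux, is that assumption~\ref{item:4:SA} only supplies $\mathsf{Var}(F_n(x)\mid P_n)\le K(1+\|\Delta_n\|_w)^2$, a variance envelope that grows with the very quantity we are trying to bound, so the noise lemma cannot be invoked until $\Delta_n$ is already known to be bounded. I would break this circularity by a self-scaling (equivalently, a stopping-time truncation) argument: rescale the iterate by the running bound $G_n:=\max\{1,\max_{k\le n}\|\Delta_k\|_w\}$, observe that the rescaled noise then has uniformly bounded conditional variance, and show that the rescaled process remains bounded, which forces $G_n$ to stabilize and hence $\limsup_n\|\Delta_n\|_w<\infty$ almost surely. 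Once boundedness holds, say $\|\Delta_n\|_w\le G_0$ eventually, the variance bound becomes the genuine constant $K(1+G_0)^2$ and the noise lemma of the previous paragraph applies, giving $\|W_n\|_w\to 0$.

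Finally I would run the shrinking-bounds induction on the drift part. Each coordinate $X_{n+1}(x)/w_x$ is a convex combination of $X_n(x)/w_x$ and $g_n(x)/w_x$ with $|g_n(x)/w_x|\le\|g_n\|_w\le\kappa\|\Delta_n\|_w+c_n$, so a standard scalar estimate (using $\sum_n\alpha_n(x)=\infty$ to wash out the initial condition and $c_n\to 0$ to discard the vanishing term) yields the implication: if $\limsup_n\|\Delta_n\|_w\le G$ then $\limsup_n\|X_n\|_w\le\kappa G$, and hence, combining with $\|W_n\|_w\to 0$ and finiteness of $\mathcal{X}$, also $\limsup_n\|\Delta_n\|_w\le\kappa G$. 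Starting from $G=G_0$ and iterating gives $\limsup_n\|\Delta_n\|_w\le\kappa^k G_0$ for every $k\in\mathbb{N}$; letting $k\to\infty$ and using $\kappa<1$ yields $\limsup_n\|\Delta_n\|_w=0$, i.e.\ $\Delta_n\to0$ almost surely. The contraction step is mechanical once boundedness and noise convergence are secured; the boundedness bootstrap against the $\|\Delta_n\|_w$-dependent variance is where the work lies.
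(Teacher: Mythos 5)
Your proposal cannot be checked against an in-paper proof because there is none: the paper imports this lemma verbatim, with citation, from Lemma~1 of the cited SARSA convergence paper (Singh, Jaakkola, Littman and Szepesv\'ari) and uses it purely as a black box inside the proof of Theorem~\ref{thm:convergence}. Measured instead against the classical proof in that cited literature (the Jaakkola--Jordan--Singh 1994 theorem, of which this lemma is the standard extension with the vanishing $c_n$ term), your plan is essentially that proof: the decomposition $F_n=g_n+w_n$ and $\Delta_n=X_n+W_n$, the supermartingale (Robbins--Siegmund) argument giving $W_n\to0$ from assumption~(ii) and a bounded conditional variance, the rescaling bootstrap to obtain almost-sure boundedness despite the $\|\Delta_n\|_w$-dependent variance envelope in~(iv), and the contraction squeeze in which $c_n\to0$ enters only as a vanishing perturbation, so that $L:=\limsup_n\|\Delta_n\|_w$ satisfies $L\le\kappa L$ and hence $L=0$ (your iteration $\kappa^k G_0$ is the same conclusion phrased iteratively). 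Two steps of your sketch are thinner than the rest, and you correctly flag both as the crux: first, ``show that the rescaled process remains bounded'' is itself a several-step argument and constitutes the bulk of the work in the original source; second, once boundedness is established, the almost-sure bound is a \emph{random} constant, so invoking your bounded-variance noise lemma with ``variance $K(1+G_0)^2$'' requires the stopping-time/truncation localization you mention only parenthetically (work on the events $\{\sup_n\|\Delta_n\|_w\le M\}$ and let $M\uparrow\infty$). Neither point is a wrong turn; they are details to be filled in an otherwise correct and entirely standard argument.
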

\begin{proof}[Proof of Theorem~\ref{thm:convergence}]\ 
    The proof builds up on the convergence results of SARSA \citep{ref:Csaba-00}, Double Q-learning \citep{ref:Hasselt-10} and uses Lemma~\ref{lemma:SA:result} as a key ingredient. For the convenience of notation, we carry out the proof in the Q-function notation. 
    That is, in the tabular setting, where no function approximation is applied, by invoking Lemma~\ref{lem:computation}, the proposed 2RA Q-learning~\eqref{eq:q-learning:2RA} is expressed as
    \begin{equation}\label{eq:proposed:Q}
        \begin{aligned}
            Q^{(i)}_{n+1}(S_n, A_n)
            &= Q^{(i)}_{n}(S_n, A_n)  \\
            &\quad + \alpha_n \beta_n^{(i)} \big( r(S_n,A_n)
            + \gamma( \max_{a'\in\mc A} \widehat Q_{N,n}(S_{n+1}, a') 
             -\sqrt{\rho_n})-Q^{(i)}_{n}(S_n, A_n) \big),
        \end{aligned}
    \end{equation}
    where $\widehat Q_{N,n}(s, a) = \frac{1}{N}\sum_{i=1}^N Q_n^{(i)}(s,a)$ for $s\in\mc S$, $a\in\mc A$. 
    In the following, we fix an arbitrary index $i\in\{1,\dots,N\}$ and with regard to Lemma~\ref{lemma:SA:result}, we define $P_n$ as the $\sigma$-field generated by $\{S_n, A_n, \alpha_n, \dots, S_0, A_0,$ $\alpha_0, Q_0^{(1)}, \dots,Q_0^{(N)}\}$, $\mathcal{X} = \mc S\times \mc A$, $\Delta_n = Q_n^{(i)} - Q^\star$ and $F_n(S_n,A_n) = r(S_n,A_n) + \gamma (\max_{a'\in\mc A} \widehat Q_{N,n}(S_{n+1},a')-\sqrt{\rho_n}) - Q^\star(S_n,A_n)$. 
    Then, 2RA Q-learning~\eqref{eq:q-learning:2RA} can be expressed as an instance of \eqref{eq:SA:eq}. 
    To apply Lemma~\ref{lemma:SA:result}, we need to ensure its assumptions are satisfied. Assumption \ref{item:1:SA} and \ref{item:2:SA} clearly hold.
    
    To show Assumption~\ref{item:3:SA}, we note that the term $F_n$ can be alternatively expressed as
    \begin{equation}\label{eq:Fn:decomp}
       F_n(S_n,A_n) = F_n^{Q^{(i)}}(S_n,A_n) + \gamma\left(\max_{a'\in\mc A}\widehat Q_{N,n}(S_{n+1},a') - \max_{a'\in\mc A} Q_n^{(i)}(S_{n+1},a') -\sqrt{\rho_n} \right),
    \end{equation}
    with 
    \begin{equation}
        F_n^{Q^{(i)}}(S_n,A_n) = r(S_n, A_n) + \gamma \max_{a'\in\mc A} Q_n^{(i)}(S_{n+1},a') - Q^\star(S_n,A_n),
    \end{equation}
    where the term $F_n^{Q^{(i)}}(S_n,A_n)$ corresponds to Watkins' Q-learning for $Q_n^{(i)}$. 
    Therefore, it is well-known \citep[Theorem~2]{ref:Jaakkola-94} that 
    $\|\mathbb{E}[F_n^{Q^{(i)}}(\cdot)|P_n]\|_w\leq\gamma \|\Delta_n\|_w$, which via \eqref{eq:Fn:decomp} implies that $\|\mathbb{E}[F_n(\cdot)|P_n]\|_w\leq\gamma \|\Delta_n\|_w + c_n$, where 
    \begin{equation}\label{eq:sequence:cn}
        c_n = \gamma\mathbb{E}\left[\max_{a'\in\mc A}\widehat Q_{N,n}(S_{n+1},a') - \max_{a'\in\mc A} Q_n^{(i)}(S_{n+1},a') -\sqrt{\rho_n} \ | \ P_n \right]. 
    \end{equation}
    It remains to show that $c_n$ converges to zero almost surely. 
    Recall that by assumption $\lim_{n\to\infty}\rho_n=0$.
    We define
    \begin{equation*}
        \delta_n^{(i)}(s,a) = Q_{n}^{(i)}(s,a) - \widehat Q_{N,n}(s,a)
    \end{equation*}
    and will show that $\lim_{n\to\infty}\|\delta_n^{(i)}(\cdot,\cdot)\|=0$ almost surely. 
    The reverse triangle inequality applied to the $\infty$-norm implies that $\mathbb{P}$-almost surely
    \begin{equation*}
        \lim_{n\to\infty} \left( \max_{a'\in\mc A} Q_n^{(i)}(S_{n+1},a') - \max_{a'\in\mc A}\widehat Q_{N,n}(S_{n+1},a') \right) = 0
    \end{equation*}
    and hence that $c_n$ converges to zero almost surely.
    
    We distinguish two cases. First, we consider an update on component $i$, then by \eqref{eq:proposed:Q}
    \begin{align*}
       \delta_{n+1}^{(i)}(s,a) &=Q_{n+1}^{(i)}(s,a) - \widehat Q_{N,n+1}(s,a) \\
       &= Q_{n}^{(i)}(s,a) + \alpha_n \left(r(s,a) + \gamma \max_{a'\in\mc A} \widehat Q_{N,n}(s',a')- \gamma \sqrt{\rho_n} - Q_{n}^{(i)}(s,a) \right)  - \widehat Q_{N,n}(s,a)\\ 
       &\qquad - \frac{1}{N}\left(Q_{n+1}^{(i)}(s,a) - Q_{n}^{(i)}(s,a)\right) \\
       &= \delta_{n}^{(i)}(s,a) + \left(\alpha_n - \frac{\alpha_n}{N}\right)\left(r(s,a) + \gamma \max_{a'\in\mc A} \widehat{Q}_{N,n}(s',a')- \gamma \sqrt{\rho_n} - Q_{n}^{(i)}(s,a)\right),
    \end{align*}
    where the second equality follows from the decomposition $\frac{1}{N}\sum_{j=1}^N Q^{(j)}_{n+1}(s,a) = \frac{1}{N}(\sum_{j\neq i} Q_n^{(j)}+ Q_{n+1}^{(i)})$. 
    The third equality then uses our proposed Q-learning update formula \eqref{eq:proposed:Q} given as $Q^{(i)}_{n+1}(s,a) = Q^{(i)}_{n}(s,a) + \alpha_n ( r(s,a) + \gamma \max_{a'\in\mathcal{A}} \widehat Q_{N,n}(s',a') - \gamma \sqrt{\rho_n}- Q_n^{(i)}(s,a) )$.
    On the other hand, if the update is performed on component $j\neq i$, then 
    \begin{align*}
       \delta_{n+1}^{(i)}(s,a) &=Q_{n+1}^{(i)}(s,a) - \widehat Q_{N,n+1}(s,a) \\
       &= Q_{n}^{(i)}(s,a)  - \widehat Q_{N,n}(s,a) - \frac{1}{N}(Q_{n+1}^{(j)}(s,a) - Q_{n}^{(j)}(s,a)) \\
       &= \delta_{n}^{(i)}(s,a) - \frac{\alpha_n}{N}(r(s,a) + \gamma \max_{a'\in\mc A} \widehat Q_{N,n}(s',a')- \gamma \sqrt{\rho_n} - Q_{n}^{(j)}(s,a) ).
    \end{align*}
    Hence, in total, we get
    \begin{align*}
        &\mathbb{E}[\delta_{n+1}^{(i)}(s,a)|P_n]\\ 
        &\quad = \frac{1}{N}\mathbb{E}\left[ \delta_{n}^{(i)}(s,a) + \frac{N-1}{N}\alpha_n(r(s,a) + \gamma \max_{a'\in\mc A} \widehat Q_{N,n}(s',a')- \gamma \sqrt{\rho_n} - Q_{n}^{(i)}(s,a))|P_n \right] \\
        &\quad \qquad + \frac{1}{N} \sum_{j\neq i}\mathbb{E}\left[\delta_{n}^{(i)}(s,a) - \frac{\alpha_n}{N}(r(s,a) + \gamma \max_{a'\in\mc A} \widehat Q_{N,n}(s',a')- \gamma \sqrt{\rho_n} - Q_{n}^{(j)}(s,a) )|P_n \right]\\
        &\quad =\mathbb{E}\left[(1-\frac{\alpha_n}{N})\delta_{n}^{(i)}(s,a)|P_n\right] \\
        &\quad =(1-\frac{\alpha_n}{N})\mathbb{E}\left[\delta_{n}^{(i)}(s,a)|P_{n-1}\right],
    \end{align*}
    where the second equality follows from the observation that $\sum_{j\neq i}Q_{n}^{(j)} = N \widehat Q_{N,n} - Q_{n}^{(i)}$. 
    Recall that $\delta^{(i)}_n(s,a) = \mathbb{E}[\delta_{n}^{(i)}(s,a)|P_{n-1}]$ for any $n$.
    Hence, we have derived the update rule
    \begin{equation}\label{eq:update:bar:delta}
        \delta^{(i)}_{n+1}(s,a) = (1-\frac{\alpha_n}{N})\delta^{(i)}_{n}(s,a),
    \end{equation}
    which directly implies that    
    $\lim_{n\to\infty}\delta_{n}^{(i)}(s,a) = 0$ almost surely for all $(s,a)\in\mathcal{S}\times\mathcal{A}$. 
    Since $\mathcal{S}$ and $\mathcal{A}$ are finite sets this implies that $\lim_{n\to\infty}\|\delta_n^{(i)}\|=0$ almost surely as desired. 
    Hence, $\lim_{n\to\infty} c_n = 0$ almost surely, which ensures Assumption~\ref{item:3:SA}.
    
    We finally show that Assumption~\ref{item:4:SA} holds. 
    Again we use the decomposition \eqref{eq:Fn:decomp}. Since the reward $r$ is assumed to be bounded, it is known \citep{ref:Csaba-00} that
    \begin{equation}\label{eq:var:vanilla:Q}
        \mathsf{Var}(F_n^{Q^{(i)}}(x)|P_n)\leq K(1+\|\Delta_n\|_w)^2,
    \end{equation}
    where again $\Delta_n = Q_n^{(i)} - Q^\star$. 
    We next show that there exists a constant $K_2$ such that
    \begin{equation}\label{eq:var:ass:iiii}
        \mathsf{Var}\left(\max_{a'\in\mc A}\widehat Q_{N,n}(S_{n+1},a') - \max_{a'\in\mc A} Q_n^{(i)}(S_{n+1},a')|P_n\right)\leq K_2(1+\|\Delta_n\|_w)^2.
    \end{equation}
    By the Cauchy-Schwarz inequality \eqref{eq:var:vanilla:Q} and \eqref{eq:var:ass:iiii} imply Assumption~\ref{item:4:SA}.
    To establish \eqref{eq:var:ass:iiii}, we show that the Q-functions $Q_n^{(i)}$ are bounded for any $n$ and any $i$.
    Such results are well-known for classical Q-learning, see \cite{ref:Gosavi-06}. For our modified Q-learning, we can show it via a simple contradiction argument.
    Suppose for the sake of contradiction there is an index $i$ such that $\lim_{n\to\infty}\|Q_n^{(i)}\|=\infty$. 
    We first consider the setting, where there is an $s'\in\mc S$ and $a'\in\mc A$ such that $\lim_{n\to\infty}Q_n^{(i)}(s',a') = \infty$. 
    So there exists $\bar n\in\mathbb{N}$ such that for all $n\geq \bar n$ we have $Q_n^{(j)}(s',a) \leq Q_n^{(i)}(s',a')$ for all $a\in\mathcal{A}$ and for all $j=1,\dots,N$. 
    Therefore, $\max_{\bar{a}\in\mathcal{A}}\widehat Q_{N,n}(s',\bar{a}) \leq 
    Q_n^{(i)}(s',a') \leq \max_{\bar{a}\in\mathcal{A}} Q_n^{(i)}(s',\bar{a})$, which implies
    \begin{subequations}
        \begin{align}
           Q_{n+1}^{(i)}(s,a) 
           & =Q_{n}^{(i)}(s,a) + \alpha_n(r(s,a) + \gamma \max_{a'\in\mathcal{A}}\widehat Q_{N,n}(s',a') - Q_{n}^{(i)}(s,a)) \label{eq:contradiction:q:l} \\
           &\leq  Q_{n}^{(i)}(s,a) + \alpha_n(r(s,a) + \gamma \max_{a'\in\mc A}  Q^{(i)}_{n}(s',a') - Q_{n}^{(i)}(s,a)), \label{eq:contradiction:q:l:2}
        \end{align}
    \end{subequations}
    for any $s\in\mc S$ and $a\in \mc A$. 
    When considering $S_n=s$, $A_n=a$ and $S_{n+1}=s'$, we see that the upper bound \eqref{eq:contradiction:q:l:2} is Watkins' Q-learning which leads to a bounded Q-function, so $Q_{n+1}^{(i)}(s,a)$ needs to be bounded for all $s,a$ which is a contradiction.
    The case where there is an $s'\in\mc S$ and $a'\in\mc A$ such that $\lim_{n\to\infty}Q_n^{(i)}(s',a') = -\infty$ follows analogously. 
    Consequently, the Q-functions $Q_n^{(i)}$ for any $n$ and any $i$ are bounded and \eqref{eq:var:ass:iiii} indeed holds, which implies Assumption~\ref{item:4:SA}.
\end{proof}
%

\subsection{Estimation Bias}\label{ssec:estimation:bias}

We now focus on the estimation bias of 2RA Q-learning induced by the term $\mathcal{E}_\rho$ in \eqref{eq:robust:estimator}. 
While Watkins' Q-learning suffers from the mentioned overestimation bias, the proposed 2RA Q-learning~\eqref{eq:q-learning:2RA} allows us to control the estimation bias via the parameters $\rho$ and $N$.
We show that for $\rho>0$ with high probability, 2RA Q-learning generates an underestimation bias, somewhat similar to Double Q-learning. 
However, in contrast to Double Q-learning, we can control the level of underestimation via the parameter $\rho$. 
Moreover, the second parameter $N$, describing the number of action-value estimates, further allows us to control the estimation bias.
\begin{theorem}[Estimation bias] \label{thm:estimation:bias} \
    \begin{enumerate}[label=(\roman*)]
        \item \label{asser:i:thm:bias}Consider any $N,n\in\mathbb{N}$, $\rho\geq 0$ and $i\in\{1,\dots,N\}$. If $\mathbb{E}[\theta_n^{(i)}]\in \mc B_\rho(\widehat\theta_{N,n})$, then the robust estimator defined in \eqref{eq:robust:estimator} provides an underestimation where the level of underestimation is controlled by $\rho$, i.e., for all $x\in \mathcal{X}, s'\in\mc S$
        \begin{equation*}
            0 \leq \gamma \phi(x)\max_{a'\in\mc A} \mathbb{E}[\phi(s',a')^\top\theta^{(i)}_n] - \mathbb{E}[\mc E_{\rho}(x,s',\widehat\theta_{N,n})] \leq \sqrt{\rho} \gamma \phi(x)\max_{a'\in\mc A}\|\phi(s',a')\|_2.
        \end{equation*}
        \item \label{asser:ii:thm:bias} Let $\theta^{(i)}$ be initialized with the same value for $i=1,\dots,N$.
        For any $\rho\geq 0$ and $n\in\mathbb{N}$, the robust estimator \eqref{eq:robust:estimator} satisfies for all $x\in\mathcal{X},s'\in\mc S$
        \begin{equation*}
            \lim_{N\to\infty} \mathbb{E}[\mc E_{\rho}(x,s',\widehat\theta_{N,n})] = \gamma \phi(x)\max_{a'\in\mc A} \left\{ \mathbb{E}[\phi(s',a')^\top\theta^{(i)}_n] -\sqrt{\rho}\|\phi(s',a')\|_2 \right\}.
        \end{equation*}
    \end{enumerate}
\end{theorem}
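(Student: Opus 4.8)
The plan is to reduce both assertions to scalar inequalities on the bracketed maximum, since the middle term $\mathbb E[\mc E_\rho(x,s',\widehat\theta_{N,n})]$ and both outer terms all carry the common prefactor $\gamma\phi(x)$, which is a nonnegative (unit) vector in the tabular/canonical setting. Throughout I would freely switch between the two equivalent forms of the estimator: the min-max form \eqref{eq:robust:estimator} and the closed form of Lemma~\ref{lem:computation}. I would also use that, under a common initialization of the $\theta^{(i)}$, the symmetry of the update \eqref{eq:q-learning:2RA} in the index $i$ (the $\beta_n^{(i)}$ are exchangeable and the shared terms $b,A_1,\mc E_\rho$ do not single out any component) makes $\theta_n^{(1)},\dots,\theta_n^{(N)}$ identically distributed, so that $\mathbb E[\theta_n^{(i)}]=\mathbb E[\widehat\theta_{N,n}]$ for every $i$.

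For assertion~\ref{asser:i:thm:bias}, the lower bound (the bias is nonnegative) uses only the min-max form and the hypothesis $\mathbb E[\theta_n^{(i)}]\in\mc B_\rho(\widehat\theta_{N,n})$: feasibility of $\mathbb E[\theta_n^{(i)}]$ in the inner minimization gives $\min_{\theta'\in\mc B_\rho(\widehat\theta_{N,n})}\phi(s',a')^\top\theta'\leq\phi(s',a')^\top\mathbb E[\theta_n^{(i)}]$ for each $a'$; taking the maximum over $a'$ and then the expectation---the right-hand side being deterministic---yields $\mathbb E[\mc E_\rho]\leq\gamma\phi(x)\max_{a'}\mathbb E[\phi(s',a')^\top\theta_n^{(i)}]$. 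For the matching $\sqrt\rho$ upper bound I would switch to the closed form: the map $\theta\mapsto\max_{a'}\{\phi(s',a')^\top\theta-\sqrt\rho\|\phi(s',a')\|_2\}$ is a maximum of affine functions, hence convex, so Jensen gives $\mathbb E[\mc E_\rho]\geq\gamma\phi(x)\max_{a'}\{\phi(s',a')^\top\mathbb E[\widehat\theta_{N,n}]-\sqrt\rho\|\phi(s',a')\|_2\}$; substituting $\mathbb E[\widehat\theta_{N,n}]=\mathbb E[\theta_n^{(i)}]$ and invoking the elementary inequality $\max_{a'}\{f(a')-g(a')\}\geq\max_{a'}f(a')-\max_{a'}g(a')$ with $g(a')=\sqrt\rho\|\phi(s',a')\|_2$ produces exactly the stated bound.

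For assertion~\ref{asser:ii:thm:bias}, the content is that the Jensen gap between $\mathbb E[\max_{a'}(\cdot)]$ and $\max_{a'}\mathbb E[(\cdot)]$ vanishes as $N\to\infty$. I would quantify this directly: writing $u(a')=\phi(s',a')^\top\widehat\theta_{N,n}-\sqrt\rho\|\phi(s',a')\|_2$, the $1$-Lipschitzness of the maximum together with Cauchy-Schwarz gives $|\mathbb E[\max_{a'}u(a')]-\max_{a'}\mathbb E[u(a')]|\leq\max_{a'}\|\phi(s',a')\|_2\,\mathbb E\|\widehat\theta_{N,n}-\mathbb E[\widehat\theta_{N,n}]\|_2$, while the right-hand side of the claim equals $\gamma\phi(x)\max_{a'}\mathbb E[u(a')]$ once $\mathbb E[\widehat\theta_{N,n}]=\mathbb E[\theta_n^{(i)}]$ is used. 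Hence it suffices to show $\mathbb E\|\widehat\theta_{N,n}-\mathbb E[\widehat\theta_{N,n}]\|_2\to0$. The key observation is that for a fixed horizon $n$ the update \eqref{eq:q-learning:2RA} modifies exactly one component per step (since $\beta_n=e_j$ for a single $j$), so after $n$ steps at most $n$ of the $N$ components differ from the common initial value; as every iterate up to step $n$ is bounded by a constant independent of $N$ (each of the $n$ increments is controlled via the bounded reward, the learning rate $\le1$, and the boundedness of $\mc E_\rho$, cf.\ the boundedness argument in the proof of Theorem~\ref{thm:convergence}), both $\widehat\theta_{N,n}$ and $\mathbb E[\widehat\theta_{N,n}]$ lie within $O(n/N)$ of that initial value, which forces the fluctuation to vanish.

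The main obstacle is twofold. In \ref{asser:i:thm:bias} the delicate point is that Jensen only lower-bounds $\mathbb E[\mc E_\rho]$ in terms of $\mathbb E[\widehat\theta_{N,n}]$, so obtaining the clean constant $\sqrt\rho$ (rather than a loose $2\sqrt\rho$) hinges on the identity $\mathbb E[\widehat\theta_{N,n}]=\mathbb E[\theta_n^{(i)}]$, i.e.\ on the components being identically distributed under a common initialization. In \ref{asser:ii:thm:bias} the subtlety is that the $\theta_n^{(i)}$ are coupled through the shared average $\widehat\theta_{N,n}$ and the shared transition data, so they are exchangeable but not independent and no off-the-shelf law of large numbers applies; the elementary ``at most $n$ components move'' bound circumvents this by giving deterministic $O(n/N)$ control on the fluctuation of $\widehat\theta_{N,n}$, with no covariance estimate required.
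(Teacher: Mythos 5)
Your proof of Assertion~\ref{asser:i:thm:bias} is essentially the paper's: the upper bound via feasibility of $\mathbb{E}[\theta_n^{(i)}]$ in the inner minimization is identical, and your lower bound (Jensen on the convex max-of-affine map, then splitting off the $\sqrt{\rho}$ term) is the paper's argument with the two steps applied in the opposite order; both hinge on $\mathbb{E}[\widehat\theta_{N,n}]=\mathbb{E}[\theta_n^{(i)}]$, which the paper also invokes as ``all $\theta_n^{(i)}$ follow the same distribution.'' For Assertion~\ref{asser:ii:thm:bias}, however, you take a genuinely different and more elementary route. The paper proves mean-square concentration of $\widehat\theta_{N,n}$ by an induction on $n$ showing $\mathsf{cov}(\theta_n^{(s)},\theta_n^{(t)})=\mathcal{O}(1/N)$ (using covariance identities for products of random variables), then invokes a weak-correlation law of large numbers (Lemma~\ref{lemma:LNN}), uniform continuity of $\mc E_\rho$ in $\theta$, and the Portmanteau theorem. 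You instead bound the Jensen gap directly by $\max_{a'}\|\phi(s',a')\|_2\,\mathbb{E}\|\widehat\theta_{N,n}-\mathbb{E}[\widehat\theta_{N,n}]\|_2$ via Lipschitzness of the max, and then observe that since $\beta_n=e_j$ selects exactly one component per step, at most $n$ of the $N$ components ever leave the common initial value, so for fixed $n$ the fluctuation of $\widehat\theta_{N,n}$ is deterministically $\mathcal{O}(n/N)$ (given the finite-horizon boundedness of the iterates, which follows by a trivial induction over the $n$ steps). This is correct and sidesteps the covariance bookkeeping and the stationarity/LLN machinery entirely; the price is that your argument is tied to the one-component-per-step update structure and to fixed $n$, whereas the paper's covariance estimate \eqref{eq:covariance:decay} is a structural fact about the coupled iterates that is reusable elsewhere. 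Your observation that both sides of the claimed identity depend on $N$ (so the statement is really that their difference vanishes) is a fair reading that your Jensen-gap formulation handles cleanly.
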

\begin{proof}
    To prove Assertion~\ref{asser:i:thm:bias}, note that according to the definition of the robust estimator \eqref{eq:robust:estimator} for any $\mathbb{E}[\theta_n^{(i)}]\in \mc B_\rho(\widehat\theta_{N,n})$ it must hold that
    \begin{align*}
        \mc E_{\rho}(x,s',\widehat\theta_{N,n}) 
        \leq \gamma \phi(x)\max_{a'\in\mc A} \phi(s',a')^\top\mathbb{E}[\theta^{(i)}_n]
        = \gamma \phi(x)\max_{a'\in\mc A} \mathbb{E}[\phi(s',a')^\top\theta^{(i)}_n] \quad \forall x\in \mathcal{X}, s'\in\mc S,
    \end{align*}
    which implies
    \begin{equation}\label{eq:bias:ass:i:step1}
         \mathbb{E}[\mc E_{\rho}(x,s',\widehat\theta_{N,n})] \leq \gamma \phi(x)\max_{a'\in\mc A} \mathbb{E}[\phi(s',a')^\top\theta^{(i)}_n] \quad \forall x\in \mathcal{X}, s'\in\mc S. 
    \end{equation}
    A lower bound can be derived as for all $x\in\mathcal{X}$, $s'\in\mathcal{S}$
    \begin{subequations} \label{eq:bias:ass:i:step2}
        \begin{align}
           \mathbb{E}[\mc E_{\rho}(x,s',\widehat\theta_{N,n})] 
           &= \gamma \phi(x) \mathbb{E}\left[\max_{a'\in\mc A} \left\{ \phi(s',a')^\top \widehat\theta_{N,n}  - \sqrt{\rho} \|\phi(s',a')\|_2 \right\} \right] \\
           &\geq \gamma \phi(x) \mathbb{E}\left[\max_{a'\in\mc A} \phi(s',a')^\top \widehat\theta_{N,n}\right] - \sqrt{\rho} \gamma \phi(x)\max_{a'\in\mc A}\|\phi(s',a')\|_2 \\ 
           &\geq \gamma \phi(x) \max_{a'\in\mc A} \mathbb{E}\left[\phi(s',a')^\top \widehat\theta_{N,n}\right] - \sqrt{\rho} \gamma \phi(x)\max_{a'\in\mc A}\|\phi(s',a')\|_2 \\
           &=\gamma \phi(x) \max_{a'\in\mc A} \mathbb{E}\left[\phi(s',a')^\top \theta^{(i)}_n\right] - \sqrt{\rho} \gamma \phi(x)\max_{a'\in\mc A}\|\phi(s',a')\|_2,
        \end{align}
    \end{subequations}
    where the first equality is due to Lemma~\ref{lem:computation}. 
    The first inequality follows from splitting the maximization or reverse triangle inequality. 
    The second inequality follows from a Jensen step as explained in \eqref{eq:Jensen:overestimation}. 
    The second equality uses the fact that all $\theta^{(i)}_n$ follow the same distribution.
    Combining \eqref{eq:bias:ass:i:step1} and \eqref{eq:bias:ass:i:step2} implies Assertion~\ref{asser:i:thm:bias}.

    To prove Assertion~\ref{asser:ii:thm:bias}, we first claim that for any $n\in\mathbb{N}$
    \begin{equation}\label{eq:LLN:theta}
        \lim_{N\to\infty} \widehat \theta_{N,n} = \mathbb{E}[\theta_n^{(i)}], 
    \end{equation}
    where the convergence is in mean square, i.e., 
    $\lim_{N\to\infty}\mathbb{E}[\|\widehat\theta_{N,n}-\mathbb{E}[\theta_n^{(i)}]\|^2]=0$.
    This in particular implies that $\widehat \theta_{N,n}$ converges to $\mathbb{E}[\theta^{(i)}]$ in distribution.
    Our second claim states that for any $x\in\mathcal{X}$ and $s'\in\mc S$, the function $\mc E_{\rho}(x,s',\theta')$ is uniformly continuous in $\theta'$. 
    Equipped with these two claims,
    \begin{subequations} \label{eq:steps:ass:ii:bias}
        \begin{align}
            \lim_{N\to\infty} \mathbb{E}[\mc E_{\rho}(x,s',\widehat\theta_{N,n})]
            &=\mathbb{E}[\mc E_{\rho}(x,s',\mathbb{E}[\theta_n^{(i)}])] \\
            &=\mc E_{\rho}(x,s',\mathbb{E}[\theta_n^{(i)}]) \\
            &= \gamma \phi(x)\max_{a'\in\mc A} \left\{ \mathbb{E}[\phi(s',a')^\top\theta^{(i)}_n] -\sqrt{\rho}\|\phi(s',a')\|_2 \right\},
        \end{align}
    \end{subequations}
    where the first equality holds due to the Portmanteau Theorem \citep[Theorem~2.1]{ref:Billingsley-99}, since $\widehat \theta_{N,n}$ converges to $\mathbb{E}[\theta_n^{(i)}]$ in distribution and the function $\mc E_{\rho}(x,s',\theta')$ is uniformly continuous in $\theta'$. 
    The second equality is true as the function $\mc E_{\rho}$ is deterministic, and the last equality is implied by Lemma~\ref{lem:computation}.
    
    It, therefore, remains to show the two claims. 
    To show \eqref{eq:LLN:theta}, we recall that by definition
    \begin{equation*}
        \widehat\theta_{N,n} = \frac{1}{N}\sum_{i=1}^N \theta_n^{(i)}.
    \end{equation*}
    By symmetry of the 2RA Q-learning~\eqref{eq:q-learning:2RA} the variables $\theta_n^{(i)}$ for all $i\in\{1,\dots,N\}$ have the same distribution, but are correlated. 
    We can exploit the weak correlations via the following result.
    \begin{lemma}[{\citet[Theorem~7.1.1]{ref:Brockwell-91}}]\label{lemma:LNN}
        Let $\{Y^{(i)}\}$ be a stationary process with mean $\mu$ and autocovariance function $\gamma(\cdot)$ defined as $\gamma(N) = \mathsf{cov}(Y^{(i+N)},Y^{(i)})$ for any $i\in\mathbb{N}$. Then, 
        \begin{equation*}
           \lim_{N\to\infty} \mathbb{E}\left[\left(\frac{1}{N}\sum_{i=0}^{N-1} Y^{(i)}-\mu\right)^2\right] = 0 \quad \text{if} \quad  \lim_{N\to\infty} \gamma(N)= 0.
        \end{equation*}
    \end{lemma}
    \noindent The 2RA Q-learning~\eqref{eq:q-learning:2RA} is given as
    \begin{equation}\label{eq:update:q:rho:zero}
        \begin{aligned}
            \theta^{(i)}_{n+1} \!&=\! (1\!-\!\alpha_n \beta_n^{(i)}A_1(X_n)) \theta^{(i)}_n\! +\! \gamma \alpha_n \beta_n^{(i)} \phi(X_n) \max_{a'\in\mc A} \left\{\phi(S_{n+1},a')^\top \widehat\theta_{N,n} \!-\!\sqrt{\rho} \|\phi(S_{n+1},a')\|_2\right\} \\
            &\qquad + \alpha_n \beta_n^{(i)} b(X_n), \quad i=1,\dots,N.
        \end{aligned}
    \end{equation}
    We claim that for any $s,t\in\{1,2,\dots,N\}$ and for any $n\in\mathbb{N}_0$
    \begin{equation}\label{eq:covariance:decay}
        \mathsf{cov}(\theta^{(s)}_{n}, \theta^{(t)}_{n}) \leq C\cdot \mathcal{O}(1/N),
    \end{equation}
    where $C\in\mathbb{R}^{d\times d}$ is some constant matrix.
    Then, according to Lemma~\ref{lemma:LNN}, we obtain \eqref{eq:LLN:theta}. 
    It, therefore, remains to show \eqref{eq:covariance:decay}, which we do by induction over $n$. 
    
    The initial condition $\theta^{(i)}_{0}=\theta_0$ is assumed to be some deterministic value for all $i$, then by using the update rule \eqref{eq:update:q:rho:zero} and recalling that $\widehat\theta_{N,0}=\theta_0$
    \begin{align*}
       \mathsf{cov}( \theta^{(s)}_{1}, \theta^{(t)}_{1}) &=   \mathsf{cov}(v + \beta_1^{(s)}w,v + \beta_1^{(t)}w)\\ &=\mathsf{cov}(\beta_1^{(s)}w,\beta_1^{(t)}w) \\
       &=\mathbb{E}[\beta_1^{(s)}\beta_1^{(t)}w w^\top] - \mathbb{E}[\beta_1^{(s)}w]\mathbb{E}[\beta_1^{(t)}w]\\
       &= \mathbb{E}[\beta_1^{(s)}\beta_1^{(t)}]\mathbb{E}[w w^\top] - \mathbb{E}[\beta_1^{(s)}]\mathbb{E}[w]\mathbb{E}[\beta_1^{(s)}]\mathbb{E}[w]^\top \\
       &= - \frac{1}{N^2} \mathbb{E}[w] \mathbb{E}[w]^\top \\
       &\leq C\cdot \mathcal{O}(1/N),
    \end{align*}
    where $v = \theta_0$, $w\!=\!\alpha_n\!\left( -A_1(X_0) \theta_0 \!+\! \gamma \phi(X_0)\max_{a'\in\mc A}\{\phi(S_{1},a')^\top\theta_0\!-\!\sqrt{\rho} \|\phi(S_{1},a')\|_2\} \!+\! b(X_0) \right)$ and we use the fact that $\mathbb{E}[\beta_1^{(s)}\beta_1^{(t)}]=0$ and $\mathbb{E}[\beta_1^{(s)}]=\frac{1}{N}$. 
    Moreover, we use $C=\mathbb{E}[w] \mathbb{E}[w]^\top$.   
    To proceed with the induction step, assume that for any $k,\ell\in\{1,2,\dots,N\}$ we have $\mathsf{cov}(\theta^{(k)}_{n}, \theta^{(\ell)}_{n}) =C\cdot \mathcal{O}(1/N)$ and show that for any $k,\ell\in\{1,2,\dots,N\}$
    \begin{align}\label{eq:induction:step}
        \mathsf{cov}(\theta^{(k)}_{n+1}, \theta^{(\ell)}_{n+1}) \leq C\cdot \mathcal{O}(1/N).
    \end{align}
    Applying the update rule~\eqref{eq:update:q:rho:zero} leads to
    \begin{equation}\label{eq:cov:ind:step}
        \begin{aligned}
        &\mathsf{cov}( \theta^{(k)}_{n+1}, \theta^{(\ell)}_{n+1}) \\
        &\qquad=\mathsf{cov}\bigg((1-\alpha_n \beta_n^{(k)}A_1(X_n)) \theta^{(k)}_n + \beta_n^{(k)}\alpha_n\bigg(\gamma \phi(X_n) \mathcal{M}_{n} +  b(X_n) \bigg)  , \\
           &\qquad \qquad  (1-\alpha_n \beta_n^{(\ell)}A_1(X_n)) \theta^{(\ell)}_n + \beta_n^{(\ell)}\alpha_n\bigg(\gamma \phi(X_n) \mathcal{M}_{n} +  b(X_n) \bigg) \bigg) \\
           &\qquad= \mathsf{cov}\left((1-\alpha_n \beta_n^{(k)}A_1(X_n))\theta^{(k)}_n, (1-\alpha_n \beta_n^{(\ell)}A_1(X_n))\theta^{(\ell)}_n\right) \\
           &\qquad\quad + \mathsf{cov}\bigg((1-\alpha_n \beta_n^{(k)}A_1(X_n))\theta^{(k)}_n, \beta_n^{(\ell)}\alpha_n\left(\gamma \phi(X_n) \mathcal{M}_{n} +  b(X_n) \right) \bigg) \\
            &\qquad\quad + \mathsf{cov}\bigg((1-\alpha_n \beta_n^{(\ell)}A_1(X_n))\theta^{(\ell)}_n, \beta_n^{(k)}\alpha_n\left(\gamma \phi(X_n) \mathcal{M}_{n} +  b(X_n) \right) \bigg)\\
            &\qquad\quad + \mathsf{cov}\bigg(\beta_n^{(k)}\alpha_n\left(\gamma \phi(X_n) \mathcal{M}_{n} +  b(X_n) \right), \beta_n^{(\ell)}\alpha_n\left(\gamma \phi(X_n) \mathcal{M}_{n} +  b(X_n) \right)  \bigg),
        \end{aligned}
    \end{equation}
    where $\mathcal{M}_{n} = \max_{a'\in\mc A}\{\phi(S_{n+1},a')^\top \widehat\theta_{N,n}-\sqrt{\rho} \|\phi(S_{n+1},a')\|_2\}$. We can show that each covariance term from above is of the order $\mathcal{O}(1/N)$. 
    For this, we recall that the properties of $\beta_n$ and its independence with respect to $X_n$ ensure that
    \begin{subequations}\label{eq:subequations:covariance}
        \begin{align}
             \mathsf{cov}(\beta_n^{(\ell)},\beta_n^{(k)}) =\mathcal{O}(1/N) \quad \text{and} \quad \mathsf{cov}(\beta_n^{(\ell)},f(X_n)) = 0
        \end{align}
        for any bounded function $f:\mathcal{X}\to\mathbb{R}$. Moreover, for any $k\neq \ell$
        \begin{equation}
             \begin{aligned}
                \mathsf{cov}(\beta_n^{(\ell)}f(X_n),\beta_n^{(k)}f(X_n)) &= \mathbb{E}[\beta_n^{(\ell)}\beta_n^{(k)}f(X_n) f(X_n)^\top] - \mathbb{E}[\beta_n^{(k)}f(X_n)]\mathbb{E}[\beta_n^{(\ell)}f(X_n)^\top] \\
                &=-\frac{1}{N^2} \|\mathbb{E}[f(X_n)]\|_2^2 \leq \mathcal{O}(1/N).
            \end{aligned}
        \end{equation}
        For any other bounded function $g:\mathcal{X}\to\mathbb{R}$, we get  
        \begin{equation}
            \begin{aligned}
                \mathsf{cov}(\beta_n^{(\ell)}f(X_n),g(X_n)) &= \mathbb{E}[\beta_n^{(\ell)}f(X_n) g(X_n)] - \mathbb{E}[\beta_n^{(\ell)}f(X_n)]\mathbb{E}[g(X_n)] \\
                &=\frac{1}{N}\mathbb{E}[f(X_n) g(X_n)]-\frac{1}{N}\mathbb{E}[f(X_n)]\mathbb{E}[g(X_n)] = \mathcal{O}(1/N).
            \end{aligned}
        \end{equation}
        Similarly, we obtain for any $s,t\in\{1,2,\dots,N\}$
        \begin{align}
            \mathsf{cov}(\theta_n^{(k)},\beta_n^{(\ell)}f(X_n)) = C\cdot\mathcal{O}(1/N),
        \end{align}
        and
        \begin{align*}
            \mathsf{cov}(\theta_n^{(k)},g(X_n) \widehat \theta_{N,n}) = \frac{1}{N}\sum_{\ell=1}^N \mathsf{cov}(\theta_n^{(k)},g(X_n)  \theta^{(\ell)}_{n}),
        \end{align*}
        whereby by using the results of \citet{ref:Bohrnstedt-69}, we can show that 
        $\mathsf{cov}(\theta_n^{(k)},g(X_n)  \theta^{(\ell)}_{n}) =C\cdot \mathcal{O}(1/N)$.
        Therefore, 
        \begin{equation}
            \mathsf{cov}(\theta_n^{(k)},g(X_n) \widehat \theta_{N,n})=C\cdot \mathcal{O}(1/N).
        \end{equation}
    \end{subequations}
    Finally, equipped with \eqref{eq:induction:step} and \eqref{eq:subequations:covariance} by exploiting the results of \citet{ref:Bohrnstedt-69} and continuing with \eqref{eq:cov:ind:step}, we show
    \begin{align*}
        \mathsf{cov}( \theta^{(k)}_{n+1}, \theta^{(\ell)}_{n+1}) = C\cdot\mathcal{O}(1/N),
    \end{align*}
    which completes the induction step. 
    We, therefore, have shown \eqref{eq:covariance:decay} and hence \eqref{eq:LLN:theta}.  
    Regarding our second claim, we show that for any $x\in\mathcal{X}$ and $s'\in\mc S$, the function $\mc E_{\rho}(x,s',\theta)$ is uniformly continuous in $\theta$. 
    For any fixed $x\in\mathcal{X}$ and $s'\in\mc S$ the the function $\mc E_{\rho}(x,s',\theta)$ can be expressed as
    \begin{equation*}
       \mc E_{\rho}(x,s',\theta) = \gamma \phi(x) \max_{a'\in\mc A} \left\{\phi(s',a')^\top\theta-\sqrt{\rho} \|\phi(s',a')\|_2\right\}
       = \gamma \phi(x)  \varphi(\theta, s'),
    \end{equation*}
    where $\varphi(\theta, s') = \max_{a'\in\mc A}\left\{ \phi(s',a')^\top\theta-\sqrt{\rho} \|\phi(s',a')\|_2\right\}$. 
    To show that $\mc E_{\rho}(x,s',\theta)$ is uniformly continuous in $\theta$, it remains to show that $\varphi$ is uniformly continuous in $\theta$. Clearly, $\varphi$ is Lipschtiz continuous in $\theta$, as $\max\{\cdot\} - \max\{\cdot\} \leq \max\{\cdot - \cdot\}$, which is the reversed triangle inequality for the $\infty$-norm. This implies the desired uniform continuity and hence hence $\mc E_{\rho}(x,s',\theta)$ is uniformly continuous in $\theta$.
    Having shown both claims completes the proof.
\end{proof}
Assertion \ref{asser:ii:thm:bias} directly implies that for $\rho=0$, the robust estimator \eqref{eq:robust:estimator} is unbiased in the limit as $N\to\infty$.
We can alternatively show this via Theorem~\ref{thm:estimation:bias}\ref{asser:i:thm:bias}.
By following the proof of Theorem~\ref{thm:estimation:bias}, we can show that for any $\rho>0$
\begin{equation} \label{eq:limit:assertion:1:bias}
    \lim_{N\to\infty} \mathbb{P}\left( \mathbb{E}[\theta_n^{(i)}]\in \mc B_\rho(\widehat\theta_{N,n}) \right) = 1,
\end{equation}
i.e., for any given $\rho$ if $N$ is chosen large enough, we can expect the assumption of Assertion~\ref{asser:i:thm:bias} of Theorem~\ref{thm:estimation:bias} to hold. To derive~\eqref{eq:limit:assertion:1:bias} note that in the proof of Theorem~\ref{thm:estimation:bias}, we show (see \eqref{eq:LLN:theta} and apply Hoelder's inequality) that
    $\lim_{N\to\infty} \mathbb{E}[ \| \widehat\theta_{N,n} - \mathbb{E}[\theta_n^{(i)}] \|] =0.$
Markov's inequality states that for any $\rho>0$
\begin{equation*}
    \mathbb{P}\left( \| \widehat\theta_{N,n} - \mathbb{E}[\theta_n^{(i)}] \| > \sqrt{\rho} \right) \leq \frac{1}{\sqrt{\rho}} \mathbb{E}\left[ \| \widehat\theta_{N,n} - \mathbb{E}[\theta_n^{(i)}] \| \right],
\end{equation*}
which directly implies \eqref{eq:limit:assertion:1:bias}.

\begin{corollary}[Vanishing estimation bias]\label{corol:estimation:bias:vanishing}
    Under the assumptions of Theorem~\ref{thm:convergence} and a regularization sequence $\{\rho_n\}_{n\in\mathbb{N}}\subset \mathbb{R}_+$ such that $\lim_{n\to\infty} \rho_n=0$, for any $N\in\mathbb{N}$ and $i\in\{1,\dots,N\}$
    \begin{equation*}
        \lim_{n\to\infty} \mathbb{E}[\mc E_{\rho_n}(x,s',\widehat\theta_{N,n})] = \lim_{n\to\infty} \gamma \phi(x)\max_{a'\in\mc A} \mathbb{E}[\phi(s',a')^\top\theta^{(i)}_n] \quad \forall x\in\mathcal{X},s'\in\mc S.
    \end{equation*}
\end{corollary}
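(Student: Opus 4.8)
The plan is to show that, as $n\to\infty$, both sides converge to the common deterministic limit $\gamma\phi(x)\max_{a'\in\mc A}\phi(s',a')^\top\theta^\star$, where $\theta^\star$ is the tabular weight vector characterized by $\Phi^\top\theta^\star=Q^\star$. The two ingredients are the almost-sure convergence provided by Theorem~\ref{thm:convergence} together with the hypothesis $\rho_n\to0$, and a bounded-convergence argument that allows passing the limit inside the expectation on each side.

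First I would record two facts about the iterates. On the one hand, the proof of Theorem~\ref{thm:convergence} applies Lemma~\ref{lemma:SA:result} with $\Delta_n=Q_n^{(i)}-Q^\star$ for an arbitrary fixed $i$, and therefore in fact establishes the per-component statement $\lim_{n\to\infty}Q_n^{(i)}=Q^\star$ $\mathbb{P}$-almost surely for every $i\in\{1,\dots,N\}$; averaging then yields $\lim_{n\to\infty}\widehat Q_{N,n}=Q^\star$. In the tabular $\theta$-notation this reads $\theta_n^{(i)}\to\theta^\star$ and $\widehat\theta_{N,n}\to\theta^\star$ $\mathbb{P}$-almost surely. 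On the other hand, all iterates admit a deterministic uniform bound: since $r$ is bounded and $\sup_{n}\rho_n<\infty$ (because $\rho_n\to0$), a one-line induction shows that $\|\theta_n^{(i)}\|_\infty\le M$ for all $n$ and $i$, with $M:=\max\{\max_i\|\theta^{(i)}_0\|_\infty,(\|r\|_\infty+\gamma\sqrt{\sup_n\rho_n})/(1-\gamma)\}$, because at each step only the sampled coordinate is updated and the update is a convex combination of quantities already bounded by $M$; consequently $\|\widehat\theta_{N,n}\|_\infty\le M$ as well.

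Next I would treat the two sides separately. For the right-hand side, boundedness together with $\theta_n^{(i)}\to\theta^\star$ lets bounded convergence give $\mathbb{E}[\phi(s',a')^\top\theta_n^{(i)}]\to\phi(s',a')^\top\theta^\star$ for each $a'\in\mc A$, and since $\mc A$ is finite the maximum is continuous, so $\gamma\phi(x)\max_{a'\in\mc A}\mathbb{E}[\phi(s',a')^\top\theta_n^{(i)}]\to\gamma\phi(x)\max_{a'\in\mc A}\phi(s',a')^\top\theta^\star$. For the left-hand side, Lemma~\ref{lem:computation} rewrites the estimator as $\mc E_{\rho_n}(x,s',\widehat\theta_{N,n})=\gamma\phi(x)\max_{a'\in\mc A}\{\phi(s',a')^\top\widehat\theta_{N,n}-\sqrt{\rho_n}\|\phi(s',a')\|_2\}$. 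The map $\theta\mapsto\max_{a'\in\mc A}\{\phi(s',a')^\top\theta-\sqrt{\rho}\|\phi(s',a')\|_2\}$ is Lipschitz (hence continuous) in $\theta$ — exactly the property established in the proof of Theorem~\ref{thm:estimation:bias} — and the additive $\sqrt{\rho_n}$ perturbation vanishes. Combining $\widehat\theta_{N,n}\to\theta^\star$ with $\rho_n\to0$ thus gives $\mc E_{\rho_n}(x,s',\widehat\theta_{N,n})\to\gamma\phi(x)\max_{a'\in\mc A}\phi(s',a')^\top\theta^\star$ $\mathbb{P}$-almost surely, and a further bounded-convergence step (using the uniform bound $\|\widehat\theta_{N,n}\|_\infty\le M$ and the boundedness of the features) upgrades this to convergence of $\mathbb{E}[\mc E_{\rho_n}(x,s',\widehat\theta_{N,n})]$ to the same limit. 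Since both limits equal $\gamma\phi(x)\max_{a'\in\mc A}\phi(s',a')^\top\theta^\star$, the asserted equality follows.

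The only delicate point is the interchange of limit and expectation on both sides; everything else is continuity combined with the already-proven almost-sure convergence. I expect this interchange to be the main obstacle, and it is resolved by the deterministic uniform bound $M$, which makes the bounded convergence theorem directly applicable. The care needed is simply to verify that $M$ can be chosen independently of $n$ and $i$, which is immediate once one observes that the single-coordinate, convex-combination updates cannot escape the interval determined by $\|r\|_\infty$, $\gamma$, and the bounded sequence $\{\rho_n\}_{n\in\mathbb{N}}$.
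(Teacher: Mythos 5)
Your proposal is correct and follows essentially the same route as the paper: invoke the almost-sure convergence $\theta_n^{(i)}\to\theta^\star$ and $\widehat\theta_{N,n}\to\theta^\star$ from Theorem~\ref{thm:convergence}, use Lemma~\ref{lem:computation} and the continuity of the finite maximum together with $\rho_n\to 0$, and pass the limit through the expectation on both sides via bounded convergence so that both sides equal $\gamma\phi(x)\max_{a'\in\mc A}\phi(s',a')^\top\theta^\star$. The only difference is cosmetic: you justify the bounded-convergence step by an explicit inductive uniform bound $M$ on the iterates, whereas the paper relies on the boundedness of the $Q_n^{(i)}$ already established (by contradiction) in the proof of Theorem~\ref{thm:convergence}.
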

\begin{proof}
    Recall that according to Theorem~\ref{thm:convergence} for any $i\in\{1,\dots,N\}$ we have $\lim_{n\to\infty}\theta^{(i)}_n = \theta^\star$ almost surely and accordingly $\lim_{n\to\infty}\widehat \theta_{N,n} = \theta^\star$ almost surely. 
    By the definition of $\mc E_{\rho_n}$ in~\eqref{eq:robust:estimator}
    \begin{equation}\label{eq:cor:van:gradient}
        \gamma \phi(x)\max_{a'\in\mc A} \mathbb{E}[\phi(s',a')^\top\theta^\star] = \mathbb{E}[\mc E_{0}(x,s',\theta^\star)] \quad \forall x\in \mathcal{X}, s'\in\mc S.
    \end{equation}
    Therefore, for all $x\in \mathcal{X}, s'\in\mc S$
    \begin{subequations}
        \begin{align}
            \lim_{n\to\infty} \mathbb{E}\left[\mc E_{\rho_n}(x,s',\widehat\theta_{N,n})\right]
            &=\mathbb{E}\left[ \lim_{n\to\infty}\mc E_{\rho_n}(x,s',\widehat\theta_{N,n})\right] \label{eq:fist:DCT}\\
            &\label{eq:second:lemma}=\mathbb{E}\left[ \lim_{n\to\infty} \gamma \phi(x) \max_{a'\in\mc A} \left\{ \phi(s',a')^\top \widehat\theta_{N,n}  - \sqrt{\rho_n} \|\phi(s',a')\|_2 \right\}\right] \\
            &\label{eq:third:interchange}= \mathbb{E}\left[\gamma \phi(x) \max_{a'\in\mc A} \left\{ \phi(s',a')^\top \theta^\star \right\} \right] \\
            &\label{eq:fourth:def:E0}= \mathbb{E}\left[\mc E_{0}(x,s',\theta^\star)\right] \\
            &\label{eq:five:previous}= \gamma \phi(x)\max_{a'\in\mc A} \mathbb{E}[\phi(s',a')^\top\theta^\star] \\
            &\label{eq:six:interchange}= \gamma \phi(x)\max_{a'\in\mc A} \lim_{n\to\infty} \mathbb{E}\left[\phi(s',a')^\top\theta_n^{(i)}\right]\\
            &\label{eq:seven:interchange}= \lim_{n\to\infty}\gamma \phi(x)\max_{a'\in\mc A}  \mathbb{E}\left[\phi(s',a')^\top\theta_n^{(i)}\right],
        \end{align}
    \end{subequations}
    where the equality \eqref{eq:fist:DCT} follows from the bounded convergence theorem. 
    The equality \eqref{eq:second:lemma} is due to Lemma~\ref{lem:computation}. 
    In \eqref{eq:third:interchange}, we use the fact that the limit and maximum can be interchanged as the maximum is over a finite set and that $\widehat \theta_{N,n}$ converges to $\theta^\star$ due to Theorem~\ref{thm:convergence}. 
    The step \eqref{eq:fourth:def:E0} uses the definition of $\mathcal{E}_0$. 
    The equality~\eqref{eq:five:previous} is due to \eqref{eq:cor:van:gradient} and \eqref{eq:six:interchange} uses that $\theta^{(i)}_n$ converges to $\theta^\star$ due to Theorem~\ref{thm:convergence} together with the bounded convergence theorem. 
    Finally, \eqref{eq:seven:interchange} uses again the fact that the limit and maximum can be interchanged as the maximum is over a finite set.
\end{proof}
\noindent Theorem~\ref{thm:estimation:bias} allows us to interpret the choice of regularization $\{\rho_n\}_{n\in\mathbb{N}}$ in a non-asymptotic manner.

\begin{remark}[Selection of parameter $\rho_n$]
    We have shown in Theorem~\ref{thm:convergence} that convergence of 2RA Q-learning~\eqref{eq:q-learning:2RA} requires a sequence $\{\rho_n\}_{n\in\mathbb{N}}$ such that $\lim_{n\to\infty}\rho_n=0$. Theorem~\ref{thm:estimation:bias} provides insights into how the specific decay of $\rho_n$ determines the resulting performance of 2RA Q-learning. 
    More precisely, Theorem~\ref{thm:estimation:bias} describes the inherent trade-off in the selection of $\rho_n$: choosing larger values of $\rho_n$ increases the probability that $\mathbb{E}[\theta_n^{(i)}]\in\mathcal{B}_{\rho_n}(\widehat \theta_{N,n})$ which guarantees an underestimation bias. 
    On the other hand, choosing smaller values of $\rho_n$ decrease the level of underestimation bias but potentially introduce an overestimation bias as the probability that $\mathbb{E}[\theta_n^{(i)}]\notin\mathcal{B}_{\rho_n}(\widehat \theta_{N,n})$ increases. 
    We further comment on the choice of regularization $\rho_n$ in the numerical experiments, Section~\ref{sec:numerical:results}.   
\end{remark}
\begin{remark}[Selection of parameter $N$]
    The convergence of 2RA Q-learning holds for any choice of $N$; see Theorem~\ref{thm:convergence}. 
    Moreover, Theorem~\ref{thm:estimation:bias} states that increasing $N$ decreases the estimation bias.
    Choosing the parameter $N$ too large, however, when using a learning rate that is $N$-times the learning rate of Watkins' Q-learning (according to Theorem~\ref{thm:AMSE}) can lead to numerical instability. Therefore, in practice, a trade-off must be made when selecting $N$.
\end{remark}

\subsection{Asymptotic Mean-Squared Error}\label{ssec:AMSE}

We have shown in Theorem~\ref{thm:convergence} that 2RA Q-learning~\eqref{eq:q-learning:2RA} asymptotically converges to the optimal Q-function. 
This section investigates the convergence rate via the so-called asymptotic mean-squared error. 
Throughout this section, we consider a tabular setting and assume without loss of generality that the optimal Q-function is such that $\theta^\star=0$. 
If $\theta^\star\neq 0$, the results can hold by subtracting $\theta^\star$ from the estimators of the Q-learning, see \citet{ref:Devraj-17}.
Given the 2RA Q-learning and the corresponding estimator $\widehat \theta_{N,n}$ as introduced in \eqref{eq:estimator:proposed:DRO}, we define its asymptotic mean-squared error (AMSE) as the limit of a scaled covariance 
\begin{equation*}
    \mathsf{AMSE}(\widehat \theta_{N}) = \lim_{n\to\infty} n \mathbb{E}[\widehat \theta_{N,n}^\top \widehat \theta_{N,n}]
    =\lim_{n\to\infty} n \mathbb{E}[\|\widehat \theta_{N,n} \|_2^2].
\end{equation*}
Our analysis also discusses the choice of the learning rate in 2RA Q-learning compared to the learning rate of Watkins' Q-learning.
\begin{theorem}[AMSE for 2RA Q-learning] \label{thm:AMSE}
    Consider a setting where the regularization sequence $\{\rho_n\}_{n\in\mathbb{N}}\subset \mathbb{R}_+$ is such that $\lim_{n\to\infty}\rho_n=0$ and $N\in\mathbb{N}$. 
    Let $\alpha_n^{\mathsf{QL}}=g/n$ be the learning rate of Watkins' Q-learning~\eqref{eq:q-learning} and consider the 2RA Q-learning~\eqref{eq:q-learning:2RA} with learning rate $\alpha_n = N\cdot g/n$, where $g$ is a positive constant\footnote{We assume that our starting index for $n$ is large enough such that $Ng/n<1$.}. 
    Then, there exists some $g_0>0$ such that for any $g>g_0$
    \begin{equation*}
        \mathsf{AMSE}(\widehat \theta_{N}) = \mathsf{AMSE}(\theta^{\mathsf{QL}}),
    \end{equation*}
    where $\{\theta^{\mathsf{QL}}_n\}_{n\in\mathbb{N}}$ is a sequence generated by Watkins' Q-learning algorithm.
\end{theorem}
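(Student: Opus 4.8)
The plan is to show that the empirical average $\widehat\theta_{N,n}$ — the object whose AMSE we study — obeys a stochastic-approximation recursion with effective step size $g/n$ and, in the limit $n\to\infty$, with the same linearized drift matrix and the same stationary noise covariance as Watkins' Q-learning. Since the asymptotic covariance of a linearly-stable stochastic-approximation scheme with a $g/n$ step size is determined entirely by these two objects through a Lyapunov equation, the two AMSEs must then coincide. First I would derive the recursion for $\widehat\theta_{N,n}$: because exactly one index $J_n$ (uniform on $\{1,\dots,N\}$) is updated at step $n$,
$$\widehat\theta_{N,n+1} = \widehat\theta_{N,n} + \tfrac{1}{N}\big(\theta^{(J_n)}_{n+1}-\theta^{(J_n)}_n\big) = \widehat\theta_{N,n} + \tfrac{\alpha_n}{N}\,h_n,$$
with $h_n = b(X_n) - A_1(X_n)\theta^{(J_n)}_n + \mc E_{\rho_n}(X_n,S_{n+1},\widehat\theta_{N,n})$. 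The choice $\alpha_n = Ng/n$ makes the effective step size $\alpha_n/N = g/n$, matching \eqref{eq:q-learning}; this is the structural reason the learning rate must be inflated by the factor $N$.

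Next I would invoke the asymptotic-covariance theory for stochastic approximation with $1/n$ step sizes, as developed for Q-learning by \citet{ref:Devraj-17}. Writing each recursion as $\vartheta_{n+1} = \vartheta_n + \tfrac{g}{n}(\bar h(\vartheta_n) + \Delta_n)$ with mean field $\bar h$ and martingale-difference noise $\Delta_n$, the scaled error $\sqrt{n}\,\vartheta_n$ is asymptotically normal with limiting covariance $\Sigma$ solving the Lyapunov equation $(\tfrac12 I + gA)\Sigma + \Sigma(\tfrac12 I + gA)^\top + g^2\Sigma_\Delta = 0$, where $A = \nabla\bar h(\theta^\star)$ and $\Sigma_\Delta$ is the stationary noise covariance at $\theta^\star$. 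A finite positive-semidefinite solution exists precisely when $\tfrac12 I + gA$ is Hurwitz, which fixes the threshold $g_0 = g_0(A)$. Since both schemes use the same step size $g/n$, it suffices to verify that the pair $(A,\Sigma_\Delta)$ agrees for the $\widehat\theta_{N,n}$-recursion and for Watkins'.

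For the drift, averaging over the uniform index gives $\mathbb{E}_{J_n}[\theta^{(J_n)}_n] = \widehat\theta_{N,n}$, so the conditional mean field of $h_n$ equals $\mathbb{E}_{X,S'}[b(X) - A_1(X)\widehat\theta_{N,n} + \mc E_{\rho_n}(X,S',\widehat\theta_{N,n})]$; as $\rho_n\to 0$ this converges to Watkins' mean field, so the linearizations $A$ coincide (using, as in \citet{ref:Devraj-17}, that the greedy action at $\theta^\star$ is almost surely unique, so the piecewise-linear $\mc E$ is differentiable there). For the noise, evaluating at the fixed point $\theta^\star = 0$ annihilates every term carrying $\theta^{(J_n)}_n$: one has $A_1(X_n)\theta^{(J_n)}_n = 0$ and $\mc E_{\rho_n}(X_n,S_{n+1},0)\to\mc E(X_n,S_{n+1},0)$, so $h_n$ reduces to $b(X_n) + \mc E(X_n,S_{n+1},0)$, which is exactly Watkins' noise term at $\theta^\star$ and is free of any dependence on $J_n$. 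Hence $\Sigma_\Delta$ is identical, the two Lyapunov equations coincide, and so do their solutions, giving $\mathsf{AMSE}(\widehat\theta_N) = \mathsf{AMSE}(\theta^{\mathsf{QL}})$.

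The main obstacle is rigorously discarding the contributions of the vanishing quantities $\rho_n$ and $\delta^{(i)}_n = \theta^{(i)}_n - \widehat\theta_{N,n}$ to the limiting covariance, not merely to the mean. Away from $\theta^\star$ the index selection does inject extra variance of order $\mathsf{Var}_{J_n}(A_1(X_n)\delta^{(J_n)}_n) = \mathcal{O}(\|\delta_n\|_2^2)$; I would control this by establishing the fluctuation estimate $\delta^{(i)}_n = \mathcal{O}_{\mathbb{P}}(n^{-1/2})$, the stochastic counterpart of the in-mean contraction $\delta^{(i)}_{n+1} = (1-\tfrac{\alpha_n}{N})\delta^{(i)}_n$ from \eqref{eq:update:bar:delta} in the proof of Theorem~\ref{thm:convergence}. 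This makes the extra term enter the covariance recursion with an effective weight that is summably small and hence vanishes in the $n\,\mathbb{E}[\|\cdot\|_2^2]$ limit; the deterministic perturbation $\sqrt{\rho_n}\to 0$ must likewise be shown to affect only lower-order terms. Verifying these rate estimates, together with the differentiability/uniqueness conditions required to apply the \citet{ref:Devraj-17} machinery, is where the real work lies.
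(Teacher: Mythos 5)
Your proposal is sound in outline but follows a genuinely different route from the paper. You project onto the average first: you derive the reduced $d$-dimensional recursion $\widehat\theta_{N,n+1}=\widehat\theta_{N,n}+\frac{\alpha_n}{N}h_n$, observe that the effective step size $\alpha_n/N=g/n$ matches Watkins', and then argue that the linearized drift and the stationary noise covariance of this reduced recursion coincide with those of \eqref{eq:q-learning}, so the two Lyapunov equations (and hence the AMSEs) are identical. The paper instead keeps the full $Nd$-dimensional stacked system \eqref{eq:system:SA}, writes down the block Lyapunov equation \eqref{eq:lyapounov:robust:ql}, exploits the exchangeability of the components to parametrize its solution by a diagonal block $V$ and an off-diagonal block $C$, and recovers $\Sigma_\infty^{\mathsf{QL}}=\frac{V+(N-1)C}{N}$ by summing a block row and invoking uniqueness; the final trace computation is then the same projection you perform at the outset. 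The two are essentially ``project then analyze'' versus ``analyze then project.'' Your version is leaner and makes the role of the factor $N$ in the learning rate transparent, and it only needs Hurwitzness of $\frac{1}{2}I+g\bar A$ rather than of both $\bar A$ and the stacked $\bar{\mathsf{A}}$; the paper's version avoids your one extra obligation — showing that the dispersion term $-A_1(X_n)\delta_n^{(J_n)}$ is asymptotically negligible noise — because in the stacked system the individual components are state variables and the cross-correlations are tracked exactly through $C$. On that obligation: you correctly note $\mathbb{E}_{J_n}[\delta_n^{(J_n)}]=0$, so the term is a martingale difference, and for it to drop out of the limiting noise covariance you only need $\mathbb{E}[\|\delta_n^{(i)}\|_2^2]\to 0$ (which follows from the almost-sure convergence $\delta_n^{(i)}\to 0$ in the proof of Theorem~\ref{thm:convergence} together with the boundedness of the iterates); the sharper rate $\mathcal{O}_{\mathbb{P}}(n^{-1/2})$ you propose is not actually required. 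Both routes rest on the same linearization step (replacing the $\max$ by the unique optimal action near $\theta^\star$), which the paper isolates in Lemma~\ref{lem:linearizaton} under Assumption~\ref{ass:linearization}; your appeal to the uniqueness of the greedy action at $\theta^\star$ is the same device and should be stated as an explicit hypothesis as the paper does.
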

\begin{proof}
    Our proof is inspired by the recent treatment of Double Q-learning \citep{ref:Wentao-20} and by \citet{ref:Devraj-17} analyzing the asymptotic properties of Q-learning. 
    The key idea is to recall that from the proof of Theorem~\ref{thm:convergence}, we know that $\theta^{(i)}_{n}\to \theta^\star = 0$ as $n\to\infty$ for any $i\in\{1,2,\dots,N\}$. 
    Hence, we can express the AMSE of $\theta^{(i)}$ alternatively as
    \begin{equation*}
        \mathsf{AMSE}(\theta^{(i)}) =  \lim_{n\to\infty} n \mathbb{E}[{\theta_n^{(i)}}^\top \theta_n^{(i)}]
        =  \tr{ \lim_{n\to\infty} n \mathbb{E}[\theta_n^{(i)} {\theta_n^{(i)}}^\top]}
        =\tr{V},
    \end{equation*}
    where the matrix $V= \lim_{n\to\infty} n \mathbb{E}[\theta_n^{(i)} {\theta_n^{(i)}}^\top]$ is called the \textit{asymptotic covariance}.
    It has been shown in \citet{ref:Devraj-17} that the asymptotic covariance of Watkins' Q-learning~\eqref{eq:q-learning} can be studied via the linearized counterpart, given as
    \begin{equation*}
        \theta^{\mathsf{QL}}_{n+1} = \theta^{\mathsf{QL}}_n + \alpha^{\mathsf{QL}}_n\phi(X_n) (r(X_n) + \gamma \phi(S_{n+1},\pi^\star(S_{n+1}))^\top \theta^{\mathsf{QL}}_n - \phi(X_n)^\top \theta^{\mathsf{QL}}_n),
    \end{equation*}
    where $\pi^\star$ is the optimal policy based on $\theta^\star$. 
    Using similar arguments from \citet{ref:Devraj-17} and \citet{ref:Wentao-20}, we can show that the asymptotic variance of 2RA Q-learning~\eqref{eq:q-learning:2RA}, which is defined as $\lim_{n\to\infty} n \mathbb{E}[\widehat \theta_{N,n}\widehat \theta_{N,n}^\top]$, can be studied by considering the linearized recursion with $\rho_n=0$, given as
    \begin{equation}\label{eq:asymptotic:DRO:qlearning}
        \begin{aligned}
            \theta^{(i)}_{n+1} &= \theta^{(i)}_n + \alpha_n \beta_n^{(i)}(b(X_n) - A_1(X_n)\theta^{(i)}_n + \underbrace{\gamma \phi(X_n)  \phi(S_{n+1},\pi^\star(S_{n+1}))^\top}_{=A_2(Z_n)}\widehat \theta_{N,n}), \quad i=1,\dots, N,
        \end{aligned}
    \end{equation}
    where $Z_n=(X_n,S_{n+1})$, $b(X_n) = \phi(X_n)r(X_n)$ and $A_1(X_n) = \phi(X_n) \phi(X_n)^\top$.
    We formally justify this linearization argument in Lemma~\ref{lem:linearizaton}.
    Using a more compact notation where 
    $\theta_n = (\theta^{(1)\top}_{n}, \dots, 
    \theta^{(N)\top}_{n})^\top$ and $\beta_n = (\beta_n^{(1)}, \dots, \beta_n^{(N)})^\top$ and choosing the learning rate $\alpha_n = \alpha_n^{\mathsf{QL}}\cdot N$, the update equation \eqref{eq:asymptotic:DRO:qlearning} can be expressed in a standard form as
    \begin{equation}\label{eq:system:SA}
        \theta_{n+1} = \theta_n + \alpha_n^{\mathsf{QL}} (\mathsf{b}(X_n) + \mathsf{A}_2(Z_n)\theta_n  - \mathsf{A}_1(X_n)\theta_n),
    \end{equation}
    where 
    \begin{equation}\label{eq:symbol:A1:A2:b}
        \begin{aligned}
            \mathsf{A}_1(X_n) &= N \cdot \mathsf{diag}(\beta_n^{(1)}A_1(X_n),\dots,\beta_n^{(N)}A_1(X_n)), \\
            \mathsf{A}_2(Z_n) &= 
            \begin{pmatrix}
                \beta_n^{(1)}  A_2(Z_n) & \dots & \beta_n^{(1)} A_2(Z_n) \\
                \beta_n^{(2)}  A_2(Z_n) & \dots & \beta_n^{(2)}  A_2(Z_n) \\
                \vdots & \ddots & \vdots \\
                \beta_n^{(N)}  A_2(Z_n) & \dots  & \beta_n^{(N)} A_2(Z_n)
            \end{pmatrix},\qquad
            \mathsf{b}(X_n) = 
            \begin{pmatrix}
                N \beta_n^{(1)} \cdot b(X_n)\\
                \vdots \\
                N \beta_n^{(N)} \cdot b(X_n)\\
            \end{pmatrix}.
        \end{aligned}
    \end{equation}
    Let $\mu$ denote the invariant distribution of the Markov chain $\{X_n\}_{n\in\mathbb{N}}$ and let $D$ be a diagonal matrix with entries $D_{ii}= \mu_i$ for all $i=1,\dots,|\mc X|$. 
    Then, when considering $X_\infty$ as a random variable under the stationary distribution, we introduce
    \begin{equation}\label{eq:bar:symbol:A1:A2:b}
        \begin{aligned}
            &\bar A_1 = \mathbb{E}[A_1(X_\infty)]=\Phi D\Phi^\top, \quad &&\bar A_2 =  \mathbb{E}[A_2(Z_\infty)]= \gamma \Phi D P S_{\pi^\star} \Phi^\top, \\ 
            &\bar{\mathsf{A}}_1=\mathbb{E}[\mathsf{A}_1(X_\infty)],  &&\bar{\mathsf{A}}_2=\mathbb{E}[\mathsf{A}_2(Z_\infty)],
        \end{aligned}
    \end{equation}
    where $S_{\pi^\star}$ is the action selection matrix of the optimal policy $\pi^\star$ such that $S_{\pi^\star}(s,(s,\pi^\star(s)))=1$ for $s\in\mc S$ and $\Phi$ is defined in \eqref{eq:q:approx}.
    With these variables at hand, we define $\bar{\mathsf{A}} = \bar{\mathsf{A}}_2 - \bar{\mathsf{A}}_1$, i.e.,
    \begin{align*}
        \bar{\mathsf{A}}=
        \begin{pmatrix}
            \frac{1}{N} \bar A_2-\bar A_1 & \frac{1}{N} \bar A_2& \frac{1}{N} \bar A_2 & \dots & \frac{1}{N} \bar A_2 \\
            \frac{1}{N} \bar A_2 & \frac{1}{N} \bar A_2-\bar A_1 & \frac{1}{N} \bar A_2 & \dots & \frac{1}{N} \bar A_2\\
            \vdots & \vdots & \vdots & \ddots & \vdots \\
            \frac{1}{N} \bar A_2 & \frac{1}{N} \bar A_2  &\frac{1}{N} \bar A_2 & \dots & \frac{1}{N} \bar A_2-\bar A_1
        \end{pmatrix}.
    \end{align*}
    Moreover, we introduce
    \begin{align*}
        \Sigma_{\mathsf{b}} = \mathbb{E}[\mathsf{b}(X_0)\mathsf{b}(X_0)^\top] + \sum_{n\geq 1} \mathbb{E}[\mathsf{b}(X_n)\mathsf{b}(X_0)^\top+\mathsf{b}(X_0)\mathsf{b}(X_n)^\top].
    \end{align*}
    According to the definition of $\mathsf{b}$, we get the block-diagonal structure
    \begin{align*}
        \mathbb{E}[\mathsf{b}(X_0) \mathsf{b}(X_0)^\top] = N \cdot \mathsf{diag}( \mathbb{E}[b(X_0) b(X_0)^\top], \dots, \mathbb{E}[b(X_0) b(X_0)^\top]),
    \end{align*}
    where the expectation is in steady-state.
    Moreover,
    \begin{align*}
      \mathbb{E}[\mathsf{b}(X_n) \mathsf{b}(X_0)^\top] = 
      \begin{pmatrix}
          \mathbb{E}[b(X_n) b(X_0)^\top] & \dots & \mathbb{E}[b(X_n) b(X_0)^\top] \\
          \vdots & & \vdots \\
          \mathbb{E}[b(X_n) b(X_0)^\top] & \dots & \mathbb{E}[b(X_n) b(X_0)^\top]
      \end{pmatrix},
    \end{align*}
    which eventually leads to a matrix of the form
    \begin{align}\label{eq:asympt:covariance}
        \Sigma_{\mathsf{b}}\! =\! 
        \begin{pmatrix}
            N \mathbb{E}[b(X_0) b(X_0)^\top] \!+\! 2 B_2 & 2B_2 & \dots & 2B_2 \\
            2 B_2 & N \mathbb{E}[b(X_0) b(X_0)^\top] \!+\! 2 B_2 & \dots & 2B_2 \\
            \vdots & \vdots & \ddots & \vdots \\
            2 B_2 & 2B_2 &  \dots & N \mathbb{E}[b(X_0) b(X_0)^\top] \!+\! 2 B_2
        \end{pmatrix}\!,
    \end{align}
    where we introduce $B_2 = \frac{1}{2}\sum_{n\geq 1} \mathbb{E}[b(X_n)b(X_0)^\top+b(X_0)b(X_n)^\top]$ and $B_1 = \mathbb{E}[b(X_0)b(X_0)^\top]+ B_2$.
    
    We define $g_0 = \inf\{g\geq 0 \ : \ g \max\{\lambda_{\max}(\bar A),\lambda_{\max}(\bar{\mathsf{A}})\} < -1\}$ and note that $g_0$ exists, since both $\bar A$ and $\bar{\mathsf{A}}$ are Hurwitz as the corresponding Q-learning variants converge (Theorem~\ref{thm:convergence}). 
    As a result for any $g>g_0$ the matrix $\frac{1}{2}I + g \bar{\mathsf{A}}$ is Hurwitz and hence the Lyapunov equation
    \begin{equation}\label{eq:lyapounov:robust:ql}
        \Sigma_\infty \left( \frac{1}{2}I + g \bar{\mathsf{A}}^\top\right) + \left( \frac{1}{2}I + g \bar{\mathsf{A}}\right)\Sigma_\infty + g^2 \Sigma_{\mathsf{b}} = 0
    \end{equation}
    has a unique solution, which describes AMSE of our proposed method (see \citet{ref:Busic-20} and \citet[Theorem~1]{ref:Wentao-20}), i.e., $\Sigma_\infty = \lim_{n\to\infty} n \mathbb{E}[\theta_n \theta_n^\top]$.
    Due to the symmetry of the proposed scheme, the matrix $\Sigma_\infty$ will consist of diagonal elements equal to $V = \lim_{n\to\infty}n\mathbb{E}[\theta_n^{(i)} \theta_n^{(i)\top}]$ and off-diagonal entries $C = \lim_{n\to\infty}n\mathbb{E}[\theta_n^{(i)} \theta_n^{(j)\top}]$ for $i\neq j$. 
    Therefore, summing the first row of matrices in \eqref{eq:lyapounov:robust:ql} and using \eqref{eq:asympt:covariance} leads to
    \begin{equation}\label{eq:lyap:components}
        V + (N-1) C + g (V + (N-1)C) (\bar A_2 - \bar A_1)^\top + g(\bar A_2 - \bar A_1)(V + (N-1)C) + g^2 N(B_1 + B_2)=0.
    \end{equation}

    Due to the definition of $g_0$, the matrix $\frac{1}{2}I + g \bar{{A}}$ is Hurwitz and hence the Lyapunov equation 
    \begin{equation}\label{eq:lyapounov:ql}
        \Sigma_\infty^{\mathsf{QL}} \left( \frac{1}{2}I + g (\bar{A}_2 - \bar A_1)^\top\right) + \left( \frac{1}{2}I + g (\bar{A}_2 - \bar A_1)\right)\Sigma_\infty^{\mathsf{QL}} + g^2 \left( B_1 + B_2 \right) = 0
    \end{equation}
    has a unique solution, which is denoted by $\Sigma_\infty^{\mathsf{QL}}$ and describes the AMSE of Watkins' Q-learning, i.e., $\Sigma_\infty^{\mathsf{QL}} = \lim_{n\to\infty}\mathbb{E}[\theta_n^{\mathsf{QL}} \theta_n^{\mathsf{QL}\top}]$, see \citet{ref:Wentao-20}.
    
    By comparing \eqref{eq:lyapounov:ql} with \eqref{eq:lyap:components} and recalling that the solutions are unique, we obtain $\Sigma_\infty^{\mathsf{QL}} = \frac{V + (N-1) C}{N}$. 
    Finally,
    \begin{subequations}
        \begin{align}
            \mathsf{AMSE}(\widehat \theta_{N}) &= \lim_{n\to\infty} n \mathbb{E}\left[ \left(\frac{1}{N}\sum_{j=1}^N \theta_n^{(j)}\right)^\top \left(\frac{1}{N}\sum_{i=1}^N \theta_n^{(i)}\right) \right] \\
            &= \frac{1}{N^2} \lim_{n\to\infty} n \mathbb{E}\left[ \left(\sum_{j=1}^N \theta_n^{(j)\top}\right) \left(\sum_{i=1}^N \theta_n^{(i)}\right) \right] \\
            &=\frac{1}{N^2} \left( N\tr{V} + N(N-1)\tr{C}\right) \\
            &=\tr{\frac{V+(N-1)C}{N}} \\
            &=\tr{\Sigma_\infty^{\mathsf{QL}}}\\
            &=\mathsf{AMSE}(\theta^{\mathsf{QL}}) .
        \end{align}
    \end{subequations}
\end{proof}
\begin{remark}[Assumption on learning rate]
    As pointed out by \citet{ref:Wentao-20} the condition $g>g_0$ in the tabular setting reduces to $g>\frac{1}{\mu_{\min}(1-\gamma)}$, where $\mu_{\min}$ denotes the minimum entry of the stationary distribution of the state.
\end{remark}

\begin{figure*}[t]
    \begin{subfigure}[b]{0.49\textwidth}
        \centering
        \includegraphics[width=0.75\textwidth]{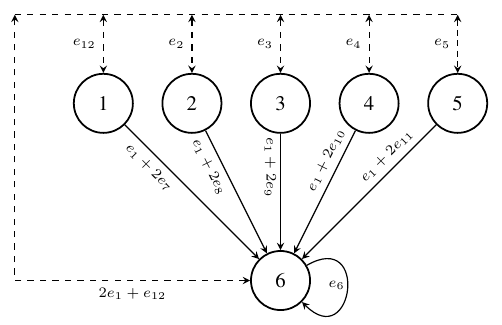}
        \subcaption{Baird's Example\label{fig:baird_env}}
    \end{subfigure}
    \hfill
    \begin{subfigure}[b]{0.5\textwidth}
        \includegraphics[width=\textwidth]{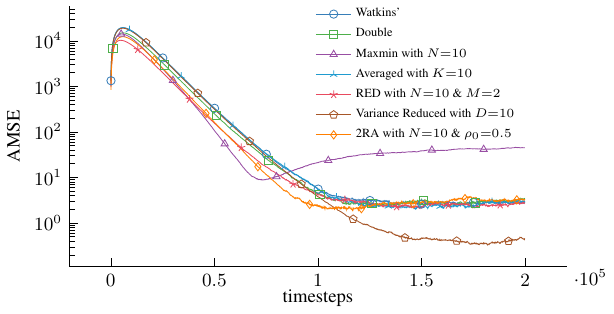}
        \subcaption{Q-learning variants\label{fig:baird_005_compare}}
    \end{subfigure}
    \\[-1mm]
    \begin{subfigure}[b]{0.49\textwidth}
        \includegraphics[width=\textwidth]{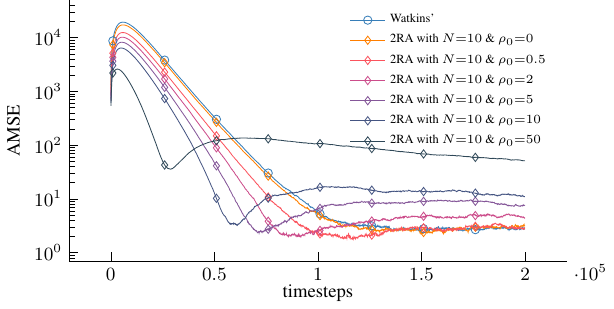}
        \subcaption{Choice of $\rho_{0}$\label{fig:baird_005_rhos}}
    \end{subfigure}
    \hfill
    \begin{subfigure}[b]{0.49\textwidth}
        \includegraphics[width=\textwidth]{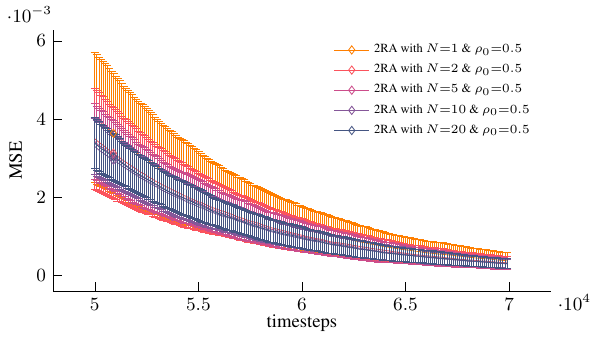}
        \subcaption{Choice of $N$\label{fig:baird_005_Ns}}
    \end{subfigure}
    \caption{Baird's Example. All Methods use an initial learning rate of $\alpha_0=0.01$, $w_{\alpha} = 10^5$, and $\gamma = 0.8$. All 2RA agents additionally use $w_{\rho}=10^{3}$. The reward function has values random-uniformly sampled from $[-0.05, 0.05]$. All results are average over $100$ consecutive experiments. (a)~Baird's example environment with the feature vectors for each state-action pair. (b)~Comparison of the AMSE of Watkins Q-learning, Double Q-learning, Maxmin Q-learning with $N=10$, where the 2RA Q-learning uses initial $\rho_0=0.5$ and $N=10$. (c)~Comparison of the AMSE of 2RA Q-learning with $N=10$ but different initial values $\rho_0$. (d)~Experiment showing the MSE in terms of mean and standard deviation for different values of $N$ with $\rho_0=0.5$.}
    \label{fig:baird_005_figures}
\end{figure*}

\section{Numerical Results}\label{sec:numerical:results}

We numerically\footnote{Here: \href{https://github.com/2RAQ/code}{github.com/2RAQ/code}} compare our presented 2RA Q-learning~\eqref{eq:q-learning:2RA} with Watkins' Q-learning~\eqref{eq:Q:learning:watkins}, Hasselt's Double Q-learning \citep{ref:Hasselt-10}, and with the Maxmin Q-learning \citep{ref:White-20}.
First, we look at Baird's Example \citep{ref:Baird-95}, then we consider arbitrary, randomly generated MDP environments with fixed rewards, and last, the CartPole example~\citep{ref:Barto-83, gym2016}.
In all experiments\footnote{Except for REDQ, since the learning rate would be too high for the multiple updates per step.}, we choose a step size $\alpha_{n} = \frac{N\alpha_{0}w_{\alpha}}{n+w_{\alpha}}$, where $N$ is the number of state-action estimates used in the respective learning method, $w_{\alpha}>0$ is a weight parameter, and $\alpha_{0}$ is the initial step size. 
The decay rate for the regularization parameter $\rho_{n}$, as required for convergence (see Theorem \ref{thm:convergence}), is chosen to be either $\rho_{n}{=}\frac{\rho_{0}w_{\rho}}{n + w_{\rho}}$ or $\rho_{n}{=}\frac{\rho_{0}w_{\rho}}{n^{2} + w_{\rho}}$ with exact parameters given for each experiment and a more detailed evaluation at the end of this section.

\vspace{-2mm}
\paragraph{Baird's Example.}

We consider the setting described in \citet{ref:Wentao-20}.
The environment (Figure~\ref{fig:baird_env}) has six states and two actions. Under action one, the transition probability to any of the six states is $1/6$, and action two results in a deterministic transition to state six. 
These transition dynamics are independent of the state at which an action is chosen.
Therefore the trajectories on which the methods are updated can be obtained from a random uniform behavioral policy that allows every state to be visited.
The features vectors $\phi(s, a)$ are constructed as shown in Figure~\ref{fig:baird_env} where each $e_{i} \in \mathbb{R}^{12}$ is the $i^\text{th}$ unit vector.
In this setting, it is known that the optimal policy is unique \citep{ref:Wentao-20}, and our theoretical results apply. 
All $\theta^{(i)}$ are initialized, as in \citet{ref:Wentao-20}, uniformly at random with values in $[0, 2]$.
Figures~\ref{fig:baird_005_compare} and \ref{fig:baird_005_rhos} have a log-scaled y-axis to emphasize the smaller differences between models as they converge.
The first important observation is that all learning methods converge to the same AMSE, which is in line with Theorem~\ref{thm:AMSE}.
An exception is Maxmin Q-learning, for which, however, no theoretical statement regarding the expected behavior of its AMSE is made.
Higher values for $\rho$ increase the convergence speed in the early learning phase, as shown in Figure~\ref{fig:baird_005_rhos}.
However, if $\rho$ is too large or its relative decay too slow, learning eventually slows down (or even temporarily worsens) as large values of $\rho$ make the update steps too big.
For the proposed choice of $\rho$, our 2RA-method outperforms the other learning methods in the first part of the learning process without getting significantly slower in the long run.
Only the AMSE of Variance Reduced Q-Learning outperforms all other methods which appears to be caused by the specific instance of Baird's experiment (compare results of the Random Environment).
Our next experiment shows that 2RA and Maxmin Q-Learning are sensitive towards different environments which prefer over-, under-, or no estimation bias at all.
Figure~\ref{fig:baird_005_Ns} uses a non-log-scaled y-axis to ensure the size of the standard errors is comparable.
It can be observed how increasing the parameter $N$ reduces the standard error of the learning across multiple experiments.
However, it is also apparent that the marginal utility of each additional theta decreases fast.

\vspace{-2mm}
\paragraph{Random Environment.} \label{ssec:random:environment}

\begin{figure*}[t]
    \begin{subfigure}[c]{0.32\textwidth}
        \includegraphics[width=\textwidth]{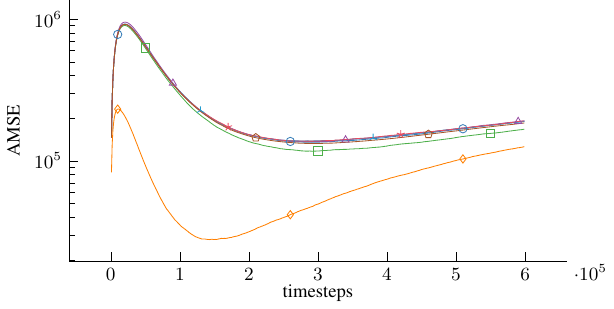}
        \subcaption{\label{fig:random_1}}
    \end{subfigure}
    \hfill
    \begin{subfigure}[c]{0.32\textwidth}
        \includegraphics[width=\textwidth]{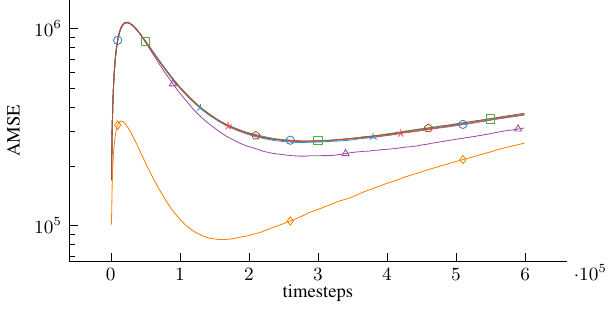}
        \subcaption{\label{fig:random_2}}
    \end{subfigure}
    \hfill
    \begin{subfigure}[c]{0.32\textwidth}
        \includegraphics[width=\textwidth]{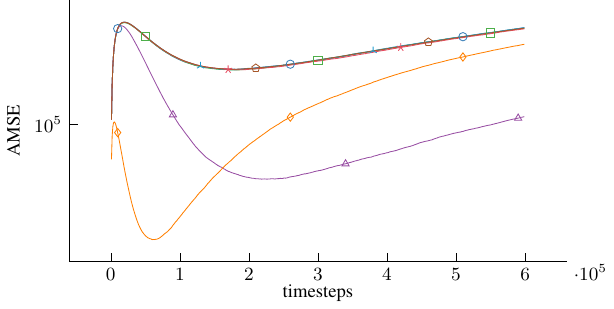}
        \subcaption{\label{fig:random_3}}
    \end{subfigure}
    \\[-1mm]
    \begin{subfigure}[c]{0.32\textwidth}
        \includegraphics[width=\textwidth]{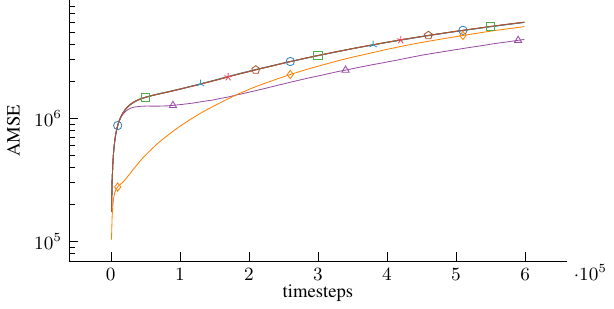}
        \subcaption{\label{fig:random_4}}
    \end{subfigure}
    \hfill
    \begin{subfigure}[c]{0.32\textwidth}
        \includegraphics[width=\textwidth]{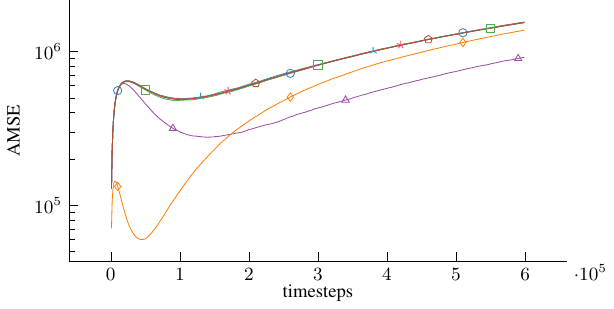}
        \subcaption{\label{fig:random_5}}
    \end{subfigure}
    \hfill
    \begin{subfigure}[c]{0.32\textwidth}
        \includegraphics[width=\textwidth]{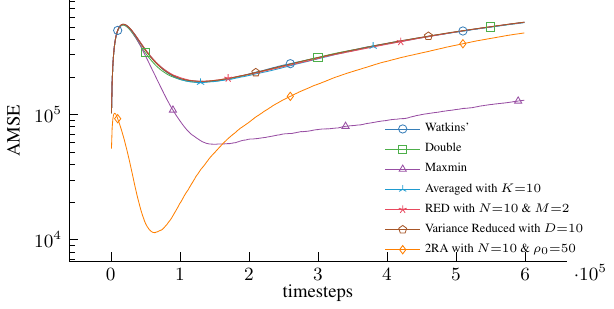}
        \subcaption{\label{fig:random_6}}
    \end{subfigure}
    \caption{Random Environment. All methods use an initial learning rate of $\alpha_0=0.01$, $w_{\alpha}=10^{5}$, $\gamma = 0.9$, and all $\theta^{(i)}$ initialized as zero. Maxmin as well as 2RA Q-learning have $N=10$ and 2RA agents additionally use $\rho_{0}=50$ and $w_{\rho}=10^{4}$. The plots show the first six randomly drawn environments and all results are average over $100$ consecutive experiments. A broader plot of the first 20 random environments is provided in Figure~\ref{fig:appednix_random_env_figures} in Appendix~\ref{app:additional:experiments}.}
    \label{fig:random_env_figures}
\end{figure*}

This experiment visualizes how different learning methods, with a fixed set of hyperparameters, behave under changes in the environment's transition dynamics.
For $|\mc A|=3$ and $|\mc S|=10$, we consider a random environment that is described by a transition probability matrix, which, for each pair $s\in\mathcal{S}$ and $a\in\mathcal{A}$, is drawn from a Dirichlet distribution with uniform parameter $0.1$. 
Analogously, we draw a distribution of the initial states $s_0$. 
Similar to Baird's example, these MDPs are ergodic and random uniform behavioral policies can be used to generate trajectories based on which updates are performed.
We further consider a quadratic reward function $r(s, a) = -qs^{2} - pa^2$ for all possible environments, where $p, q \in \mathbb{R}_+$ are such that $p < q$.
Therefore, different environments have the same reward function but different transition dynamics.
For our experiments, we chose $q = 0.1$ and $p = 0.01$.
Each environment of Figure~\ref{fig:random_env_figures} is randomly drawn, in sequence, from the same random seed.
The resulting dynamics vary significantly between different environments as only one constant selection of hyperparameters is used, but 2RA Q-learning consistently outperforms the other methods in the early stages of learning. \\
With the exception of Maxmin, the other benchmarks perfom similarly to Watkins' Q-Learning.
A further observation is that the better 2RA Q-learning performs in an environment, the better Double Q-learning performs. This indicates that these are environments where an underestimation bias is beneficial \citep{ref:White-20}, with the strength of the effect varying with the drawn transition dynamics.

\vspace{-2mm}
\paragraph{CartPole.}

The well-known CartPole environment \citep{gym2016} serves as a more practical application while still using the same linear function approximation model from the previous experiments in combination with a discretized CartPole state space.
For each timestep, in which the agent can keep the pole within an allowed deviation from an upright angle and the cart's starting position along the horizontal axis, it receives a reward of $+1$.
An episode ends if either one of the thresholds for allowed deviation is broken.
Since a random uniform behavioral policy would not enable visits to all regions of the state space, the latest updated policy combined with $\epsilon$-greedy exploration is used to generate the next timestep which is then used to update the model; Updates are applied after each timestep.
We compare the different learning algorithms based on how many training episodes are required to solve the CartPole task.
The task is considered to be solved if, during the evaluation, the average reward over $100$ episodes with a maximum allowed stepcount of $210$ reaches or exceeds $195$.
Across 1000 experiments, the number of episodes until the task is solved (hit times) is collected for each learning method.
Methods that, on average, solve the environment with fewer training episodes are ranked higher in the performance comparison.
As CartPole benefits from a high learning rate, an initial $\alpha_{0}=0.4$ is chosen and decayed per episode $e$, as compared to the decay per timestep of the previous experiments, such that $\alpha_{e} = \alpha_{0}\frac{w_{\alpha}}{e + w_{\alpha}}$. 

Comparing the hit time distributions of the different algorithms shows that the 2RA mean performance is better than Double Q-learning, which outperforms both Watkins' and Maxmin Q-learning by a significant margin.
CartPole appears to benefit from the underestimation bias introduced by Double and 2RA Q-learning.
This is consistent with the previous experiments where the good performance of 2RA Q-learning correlates with good performance of Double Q-learning.
Since this experiment has deterministically initialized $\theta_{0}$ as well as deterministic state transitions, REDQ and Variance Reduced Q-Learning are not comparable in this setting.

In Appendix~\ref{app:ssec:neural:networks}, we provide an additional example, where we test 2RA Q-learning when used with neural network Q-function approximation, applied to the LunarLander environment \citep{gym2016}. Also there, 2RA Q-learning shows good performance, despite the fact, that our theoretical results do not apply.

\begin{figure*}[t]
    \begin{subfigure}[b]{0.49\textwidth}
        \includegraphics[width=\textwidth]{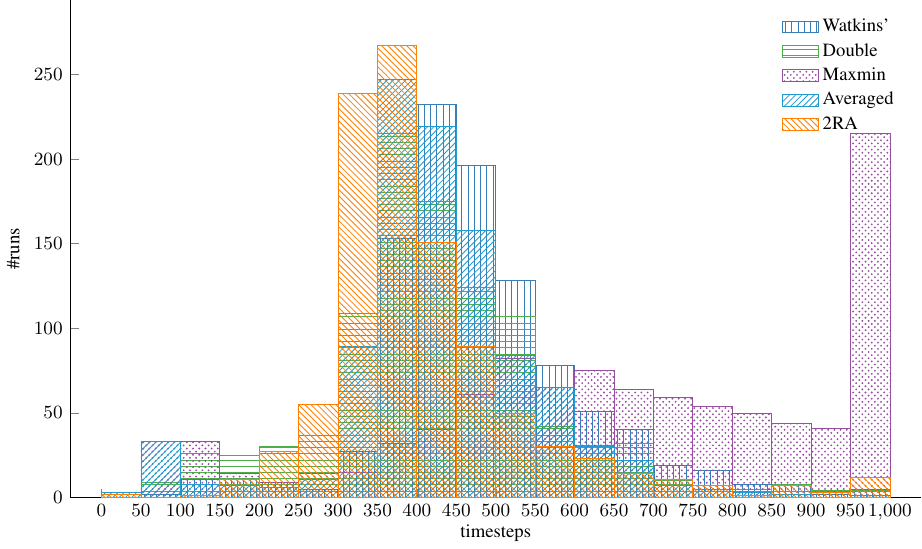}
        \subcaption{Distribution of hit times\label{fig:cartpole_hist}}
    \end{subfigure}
    \hfill
    \begin{subfigure}[b]{0.49\textwidth}
        \centering
        \resizebox{0.75\textwidth}{!}{\begin{tabular}{ |l|c| } 
     \hline
     Algorithm & Mean hit time \\ 
     \hline
     Watkins' Q-Learning & $457.35 \pm 128.18$ \\ 
     Double Q-Learning & $401.89 \pm 144.43$ \\ 
     Maxmin Q-Learning & $645.02 \pm 270.01$ \\ 
     Averaged Q-Learning & $404.09 \pm 124.19$ \\ 
     2RA Q-Learning & $386.19 \pm 133.47$ \\ 
     \hline
\end{tabular}
}\\
        \vspace*{1.5cm}
        \subcaption{Mean and std of hit times\label{fig:cartpole_table}}
    \end{subfigure}
    \caption{Cartpole, 1000 experiments. All methods use an initial learning rate of $\alpha_{0}=0.4$, $w_{\alpha}=100$, $\gamma=0.999$ and all $\theta^{(i)}$ initialized as zero. Maxmin, as well as 2RA Q-learning, have $N=8$. 2RA further uses $\rho_{0}=150$ and $w_{\rho}=10^{4}$. All algorithms are evaluated after every $50$ episodes and recorded if the average evaluation reward reaches or exceeds $195$. (a) Shows the distributions of each algorithm's hit times and (b) lists the respective mean hit times and corresponding standard deviations.}
    \label{fig:cartpole_figures}
\end{figure*}
\section{Discussion and Conclusion} \label{sec:conclusion}

In this work, we proposed 2RA Q-learning and showed that it enables control of the estimation bias via the parameters $N$ and $\rho$ while maintaining the same asymptotic convergence guarantees as Double and Watkin's Q-learning. 
In practice, the control of the estimation bias enables faster convergence to a good-performing policy in finitely many steps which is caused by the intrinsic property of environments to favor an over-, an under-, or no estimation bias at all.
Therefore, determining the optimal bias adjustment is highly dependent on the specific environment and rigorous analysis of environments' bias preferences is not yet available.
To account for this, 2RA Q-learning provides two additional tuning parameters that can be used to fine-tune learning for these environment preferences. 
This level of control, combined with computational costs comparable to existing methods, makes 2RA Q-learning a valuable addition to the RL tool belt.
The conducted numerical experiments for various settings corroborate our theoretical findings and highlight that 2RA Q-learning generally performs well and mostly outperforms other Q-learning variants.

\printbibliography


\appendix
\section{Linearization results} \label{app:linearization}

The proof of Theorem~\ref{thm:AMSE} uses a key result, stating that the asymptotic mean-squared error of the proposed 2RA Q-learning method \eqref{eq:q-learning:2RA} can be alternatively characterized by the linearized recursion~\eqref{eq:asymptotic:DRO:qlearning}. 
This result is a modification of the analysis for Double Q-learning derived in \citep[Appendix~A]{ref:Wentao-20}. 
To derive a formal statement, we introduce the key tool to analyze the linearization \eqref{eq:asymptotic:DRO:qlearning}, which is the ODE counterpart of the 2RA Q-learning scheme~\eqref{eq:q-learning:2RA} given as
\begin{align}\label{eq:nonlin:ODE}
    \dot \theta^{(i)}(t) = \lim_{n\to\infty} g \mathbb{E}[\phi(X_n)\big( r(X_n)-\phi(X_n)^\top \theta^{(i)}(t) \big) 
    + \gamma  \phi(X_n) (\max_{a'} \phi(S_{n+1},a') \widehat\theta_{N}(t) )],
\end{align}
where $\widehat\theta_{N}(t) = \frac{1}{N}\sum_{i=1}^N \theta^{(i)}(t)$ and where we have used $\lim_{n\to\infty} \rho_n =0$.
With the help of the ODE~\eqref{eq:nonlin:ODE} we can justify why working with the linearized system \eqref{eq:asymptotic:DRO:qlearning} allows us to quantify the AMSE for the 2RA Q-learning~\eqref{eq:q-learning:2RA}. 
This justification requires the following Assumption. 
We use the vectorized notation ${{\theta}} = (\theta^{{(1)}^\top},\dots,\theta^{{(N)}^\top})^\top$ and ${\mb{\theta^\star}=(\theta^{\star\top},\dots,\theta^{\star\top})^\top}$ where $\theta^\star\in\mathbb{R}^d$ is the optimal solution to the underlying MDP. 

\begin{assumption}[Linearization]\label{ass:linearization}
    We stipulate that for any $N\in\mathbb{N}$
    \begin{enumerate}[(i)]
        \item \label{ass:i:linearization} The process $\theta^{(i)}(t)$ described by the ODE~\eqref{eq:nonlin:ODE} has a globally asymptotically stable equilibrium $\bar\theta^{(i)}$ for any $i=1,\dots, N$.
        \item \label{ass:ii:linearization}The optimal policy $\pi^\star$ of the underlying MDP is unique.
        \item \label{ass:iii:linearization}The sequence of random variables $\{n \|\theta_n-
    {\mb{\theta^\star}}\|_2^2, n\geq 1 \}$ is uniformly integrable.
    \end{enumerate}
\end{assumption}
Sufficient conditions for Assumption~\ref{ass:i:linearization}, when using linear function approximation and in the setting $N=1$, are studied in \citet{ref:Melo-08,ref:Lee-20}. 
Assumption~\ref{ass:ii:linearization} is standard in many theoretical treatments of Q-learning and  Assumption~\ref{ass:iii:linearization} for $N=1$ has been established, see \citet[Theorem 5.5.2]{durrett_book}, \citet{ref:Devraj-17}. 
\begin{lemma}[Linearization] \label{lem:linearizaton}
    Let $\{\theta_n\}_{n\in\mathbb{N}}$ be a sequence 
    generated by the 2RA Q-learning~\eqref{eq:q-learning:2RA} and let $\{\bar\theta_n\}_{n\in\mathbb{N}}$ be a sequence generated by its linearized counterpart \eqref{eq:asymptotic:DRO:qlearning}. 
    Under Assumption~\ref{ass:linearization}, we have
    \begin{equation*}
        \lim_{n\to\infty} n \mathbb{E}[\|\theta_n-{\mb{\theta^\star}}\|_2^2]
        =\lim_{n\to\infty} n \mathbb{E}[\|\bar\theta_n-{\mb{\theta^\star}}\|_2^2].
    \end{equation*}
\end{lemma}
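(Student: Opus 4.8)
The plan is to follow the linearization analysis of Double Q-learning in \citet[Appendix~A]{ref:Wentao-20} and adapt it to the nonlinearity and the vanishing regularization of our scheme. The guiding principle is that once the iterates enter a neighborhood of the equilibrium, the nonlinear drift of \eqref{eq:q-learning:2RA} agrees with the linear drift of \eqref{eq:asymptotic:DRO:qlearning}, so the two recursions share the same $O(1/\sqrt{n})$ fluctuations and hence the same scaled second moment. Three facts drive the argument: by Theorem~\ref{thm:convergence} every component satisfies $\theta_n^{(i)}\to\theta^\star$ and $\widehat\theta_{N,n}\to\theta^\star$ $\mathbb{P}$-almost surely; by assumption $\rho_n\to 0$; and by Assumption~\ref{ass:linearization}\ref{ass:ii:linearization} the optimal action $\pi^\star(s')$ is the \emph{unique} maximizer of $a'\mapsto\phi(s',a')^\top\theta^\star$ for every $s'$.

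First I would isolate the discrepancy between the two drifts. Invoking Lemma~\ref{lem:computation}, the per-step nonlinear increment of component $i$ differs from its linearized counterpart only through the scalar
\begin{equation*}
    g_n(S_{n+1}) = \max_{a'\in\mc A}\left\{\phi(S_{n+1},a')^\top\widehat\theta_{N,n} - \sqrt{\rho_n}\,\|\phi(S_{n+1},a')\|_2\right\} - \phi(S_{n+1},\pi^\star(S_{n+1}))^\top\widehat\theta_{N,n},
\end{equation*}
multiplied by $\gamma\alpha_n\beta_n^{(i)}\phi(X_n)$. The task thus reduces to showing that this perturbation is asymptotically negligible at the $1/\sqrt{n}$ scale. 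Since $\widehat\theta_{N,n}\to\theta^\star$ and $\pi^\star$ is the strict maximizer at $\theta^\star$, there is an almost surely finite random index beyond which $\arg\max_{a'}\phi(S_{n+1},a')^\top\widehat\theta_{N,n} = \pi^\star(S_{n+1})$; combined with $\sqrt{\rho_n}\to 0$ this gives $g_n(S_{n+1})\to 0$ almost surely, while a margin and Lipschitz estimate bounds $|g_n|$ on that event by a quantity proportional to $\|\widehat\theta_{N,n}-\theta^\star\|_2 + \sqrt{\rho_n}$.

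Equipped with this, I would run the ODE/stochastic-approximation comparison. Assumption~\ref{ass:linearization}\ref{ass:i:linearization} supplies global asymptotic stability of the equilibrium of the governing ODE~\eqref{eq:nonlin:ODE}, which legitimizes the diffusion approximation and lets me invoke the central limit theorem for Markovian stochastic approximation used in \citet{ref:Devraj-17} and \citet{ref:Wentao-20}: both \eqref{eq:q-learning:2RA} and its linearization \eqref{eq:asymptotic:DRO:qlearning} have $\sqrt{n}(\theta_n-\mb{\theta^\star})$ converging in distribution to the same centered Gaussian, since the only difference — the $g_n$ term — contributes an error of order $o(1/\sqrt{n})$ that cannot alter the limiting covariance. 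Consequently $n\|\theta_n-\mb{\theta^\star}\|_2^2$ and $n\|\bar\theta_n-\mb{\theta^\star}\|_2^2$ share the same weak limit.

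The final step converts this weak statement into the claimed equality of limiting expectations. Convergence in distribution of $n\|\theta_n-\mb{\theta^\star}\|_2^2$ alone does not yield convergence of its mean, so I would invoke the uniform integrability of Assumption~\ref{ass:linearization}\ref{ass:iii:linearization} (for the nonlinear recursion; the linear one inherits it from the classical results of \citet[Theorem~5.5.2]{durrett_book} and \citet{ref:Devraj-17}), which upgrades the weak convergence to convergence of the first moments of $n\|\cdot\|_2^2$ and hence delivers the asserted identity. The main obstacle is the middle step: rigorously controlling the nonlinear error $g_n$ near the equilibrium — pinning down the random time after which the maximizing action freezes at $\pi^\star$, and verifying that the residual effect of both $g_n$ and the shrinking radius $\rho_n$ stays within the $o(1/\sqrt{n})$ budget so that the asymptotic covariance is left unchanged.
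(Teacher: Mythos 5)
Your proposal is correct and follows essentially the same route as the paper: exploit the uniqueness of $\pi^\star$ so that the greedy action freezes at $\pi^\star$ once the iterates are near $\theta^\star$, invoke the stochastic-approximation CLT of \citet{ref:Wentao-20} and \citet{ref:Devraj-17} so that both recursions yield $\sqrt{n}(\theta_n-\mb{\theta^\star})\overset{d}{\to}\mathcal{N}(0,\Sigma)$ with $\Sigma$ the unique solution of the Lyapunov equation~\eqref{eq:lyapounov:robust:ql}, and then upgrade the distributional limit of $n\|\theta_n-\mb{\theta^\star}\|_2^2$ to convergence of expectations via the uniform integrability in Assumption~\ref{ass:linearization}\ref{ass:iii:linearization}. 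The only notable difference is that the paper disposes of your acknowledged main obstacle (the $o(1/\sqrt{n})$ control of the residual $g_n$) more cleanly: using the margin $\omega=\min_{(s,a):a\neq\pi^\star(s)}\{\phi(s,\pi^\star(s))^\top\theta^\star-\phi(s,a)^\top\theta^\star\}>0$ it shows the ODE drift $w$ is \emph{exactly} affine on a fixed $\varepsilon$-neighborhood of $\mb{\theta^\star}$, so $\nabla_\theta w(\mb{\theta^\star})=g(\bar{\mathsf{A}}_2-\bar{\mathsf{A}}_1)$ and the linearization hypothesis of the cited CLT is verified without any quantitative bound on the per-step discrepancy.
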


\begin{proof}
    In a first step, we show that the ODE~\eqref{eq:nonlin:ODE} for any $i=1,\dots,N$ has a unique globally asymptotically stable equilibrium given as $\bar{\theta}^{(i)} = \theta^\star$, where $\theta^\star$ is the the limit of Watkins' Q-learning \citep[Equation~(22)]{ref:Wentao-20}. 
    Note that by Assumption~\ref{ass:linearization}\ref{ass:i:linearization}, the ODE~\eqref{eq:nonlin:ODE} has a unique globally asymptotically stable equilibrium that we denote as $\bar\theta^{(i)}$ for all $i=1,\dots,N$. 
    By symmetry, any perturbation of it is a globally asymptotically equilibrium too. 
    Hence, by uniqueness we must have $\bar\theta^{(i)}=\bar\theta^{(j)}$ for all $i,j\in\{1,\dots,N\}$. 
    Since all equilibrium points are identical, we recover the equilibrium point of the ODE describing Watkins' Q-learning, i.e., $\bar\theta^{(i)} = \theta^\star$ for all $i=1,\dots,N$.
    
    Next, in order to apply \citep[Theorem~3]{ref:Wentao-20} with respect to the ODE~\eqref{eq:nonlin:ODE} we define for any $i=1,\dots,N$
    \begin{align*}
        w^{(i)}({ {\theta(t)}}) = \lim_{n\to\infty} g \mathbb{E}[\phi(X_n)\big( r(X_n)-\phi(X_n)^\top \theta^{(i)}(t) \big) 
        + \gamma  \phi(X_n) (\max_{a'} \phi(S_{n+1},a') \widehat\theta_{N}(t) )].
    \end{align*}
    With the vector notation $w( {\theta}) = (w^{(1)}({ {\theta}})^\top,\dots,w^{(N)}({ {\theta}})^\top)^\top$ and by plugging in the globally asymptotically stable equilibrium from above we obtain 
    \begin{align}\label{eq:linearized:ODE}
        w({\mb{\theta^\star}}) = g\bar{\mathsf{b}} + g (\bar{\mathsf{A}}_2-\bar{\mathsf{A}}_1) {\mb{\theta^\star}},
    \end{align}
    where $\bar{\mathsf{b}}$, $\bar{\mathsf{A}}_1$ and $\bar{\mathsf{A}}_2$ have been introduced in \eqref{eq:bar:symbol:A1:A2:b}. 
    Note that \eqref{eq:linearized:ODE} corresponds to the ODE of the linearized 2RA Q-learning \eqref{eq:asymptotic:DRO:qlearning} at the point ${\mb{{ {\theta^\star}}}}$. 
    We aim to show that $\nabla_{ {\theta}}w({\mb{\theta^\star}}) = g(\bar{\mathsf{A}}_2-\bar{\mathsf{A}}_1)$. 
    This result follows from observing that there exists $\varepsilon>0$ such that for any ${ {\theta}}$ such that $\|{ {\theta}}-{\mb{\theta^\star}}\|_\infty \leq  \varepsilon$ it holds 
    $w^{(i)}({ {\theta}}) = g\bar{\mathsf{b}} + g (\bar{\mathsf{A}}_2-\bar{\mathsf{A}}_1)\theta$. 
    To see why this is the case, by following \citet{ref:Wentao-20}, note that for the optimal policy $\pi^\star$ corresponding to $\theta^\star$ we can define
    \begin{equation*}
        \omega = \min_{(s,a)\in\mathcal{X}: a\neq\pi^\star(s)} \{ \phi(s,\pi^\star(s))^\top \theta^\star - \pi(s,a)^\top \theta^\star \} >0,
    \end{equation*}
    where the strict positivity follows from the uniqueness of $\pi^\star$. 
    Choose $\varepsilon = \frac{\omega}{3\|\Phi\|_1}$ and consider any $\theta^{(i)}\in\mathbb{R}^d$ such that $\|\theta^{(i)}-\theta^\star\|_\infty\leq \varepsilon$. 
    Then, for any $s\in\mathcal{S}$ and $a\in\mathcal{A}$ with $a\neq \pi^\star(s)$, it holds
    \begin{equation*}
        \phi(s,\pi^\star(s))^\top \theta^{(i)} - \phi(s,a)^\top \theta^{(i)} \geq 
        \phi(s,\pi^\star(s))^\top \theta^\star - \phi(s,a)^\top \theta^\star - 2\|\Phi^\top(\theta^{(i)}-\theta^\star)\|_\infty
        \geq \omega - \frac{2\omega}{3} >0,
    \end{equation*}
    which implies $\pi^\star = \pi_\theta$, i.e., for any $\theta$ such that $\|\theta-{\mb{ \theta^\star}}\|_\infty \leq \varepsilon$, the corresponding greedy policy $\pi_{\mb{ \theta^\star}}$ is optimal.
    Since $\|{{\theta^{(i)}}}-{{\theta^\star}}\|_\infty\leq \|{ {\theta}}-{ \mb{\theta^\star}}\|_\infty$, it indeed holds for any ${ {\theta}}$ such that $\|{ {\theta}}-{\mb {\theta^\star}}\|_\infty \leq \varepsilon$ that $w^{(i)}({ {\theta}}) = g\bar{\mathsf{b}} + g (\bar{\mathsf{A}}_2-\bar{\mathsf{A}}_1)\theta$. 
    So we have shown 
    \begin{equation}\label{eq:gradient:w}
        \nabla_{ {\theta}}w({ \mb{\theta^\star}}) = g(\bar{\mathsf{A}}_2-\bar{\mathsf{A}}_1).
    \end{equation}
    In the final step, we define
    \begin{align*}
        W^{(i)}(Z_n) =  \mathbb{E}[\phi(X_n)\big( r(X_n)-\phi(X_n)^\top \theta^\star \big) 
        + \gamma  \phi(X_n) (\max_{a'} \phi(S_{n+1},a') \theta^\star )],
    \end{align*}
    with the notation from \eqref{eq:symbol:A1:A2:b}
    and denote its vectorized version as ${\textbf{W}}(Z_n) = (W^{(1)}(Z_n)^\top,\dots,W^{(N)}(Z_n)^\top)^\top$. We then note that
    \begin{equation*}
        {\textbf{W}}(Z_n)   ={\mathsf{b}}(Z_n) +  ({\mathsf{A}}_2(Z_n)-{\mathsf{A}}_1(Z_n)){ \mb{\theta^\star}}.
    \end{equation*}
    Then, by definition of the asymptotic covariance
    \begin{equation}\label{eq:covariance:C}
        \begin{aligned}
            C_\theta({ \mb{\theta^\star}}) &= \sum_{n=-\infty}^\infty \mathbb{E}[(g{\textbf{W}}(Z_n) - w({\mb{\theta^\star}})(g{\textbf{W}}(Z_1) - w({\mb{\theta^\star}})^\top] \\
            &=g^2\sum_{n=-\infty}^\infty \mathbb{E}[{\textbf{W}}(Z_n){\textbf{W}}(Z_1)^\top] \\
            &=b^2 \mathsf{\Sigma_b}.
        \end{aligned}
    \end{equation}
    From the two results \eqref{eq:gradient:w} and \eqref{eq:covariance:C} by invoking \citep[Theorem~3]{ref:Wentao-20} we obtain
    \begin{equation}\label{eq:normality:ODE}
        \sqrt{n} ({ {\theta_n}}-{\mb{\theta^\star}}) \overset{d}{\to} \mathcal{N}(0,\Sigma),
    \end{equation}
    where $\Sigma$ is the unique solution to the Lyapunov equation~\eqref{eq:lyapounov:robust:ql}.
    By applying the Continuous Mapping Theorem to \eqref{eq:normality:ODE} we get
    \begin{equation}\label{eq:MSE:distr}
        n \|{ {\theta_n}}-{\mb{\theta^\star}}\|_2^2 \overset{d}{\to} \|X\|_2^2, \quad X\sim\mathcal{N}(0,\Sigma).
    \end{equation}
    Finally, combining \eqref{eq:MSE:distr} with Assumption~\ref{ass:linearization}\ref{ass:iii:linearization} according to \citep[Theorem 5.5.2]{durrett_book} ensures that
    \begin{equation*}
        \lim_{n\to\infty} n \mathbb{E}[\|{ {\theta_n}}-{\mb{\theta^\star}}\|_2^2] =\mathbb{E}[\|X\|_2^2] = \tr{\Sigma}.
    \end{equation*}
    When considering the linearization \eqref{eq:q-learning:2RA} instead of the 2RA Q-learning, by following the same lines without the need to linearize, we obtain
    \begin{equation*}
        \lim_{n\to\infty} n \mathbb{E}[\|{ {\bar\theta_n}}-{\mb{\theta^\star}}\|_2^2] =\mathbb{E}[\|X\|_2^2] = \tr{\Sigma},
    \end{equation*}
    where again $P$ is the unique solution to  the Lyapunov equation~\eqref{eq:lyapounov:robust:ql}. This completes the proof.
\end{proof}

\newpage

\section{Additional Numerical Results}\label{app:additional:experiments}
\subsection{Additional Plots for Random Environment Experiment}\label{app:plots:random:environment}

We provide some additional plots of the random experiment described in Section~\ref{ssec:random:environment}.

\begin{figure*}[h]
    \begin{subfigure}[c]{0.245\textwidth}
        \includegraphics[width=\textwidth]{figures/random_1.pdf}
        \subcaption{\label{fig:appendix_random_1}}
    \end{subfigure}
    \begin{subfigure}[c]{0.245\textwidth}
        \includegraphics[width=\textwidth]{figures/random_2.pdf}
        \subcaption{\label{fig:appendix_random_2}}
    \end{subfigure}
    \begin{subfigure}[c]{0.245\textwidth}
        \includegraphics[width=\textwidth]{figures/random_3.pdf}
        \subcaption{\label{fig:appendix_random_3}}
    \end{subfigure}
    \begin{subfigure}[c]{0.245\textwidth}
        \includegraphics[width=\textwidth]{figures/random_4.pdf}
        \subcaption{\label{fig:appendix_random_4}}
    \end{subfigure}
    \vskip\baselineskip
    \begin{subfigure}[c]{0.245\textwidth}
        \includegraphics[width=\textwidth]{figures/random_5.pdf}
        \subcaption{\label{fig:appendix_random_5}}
    \end{subfigure}
    \begin{subfigure}[c]{0.245\textwidth}
        \includegraphics[width=\textwidth]{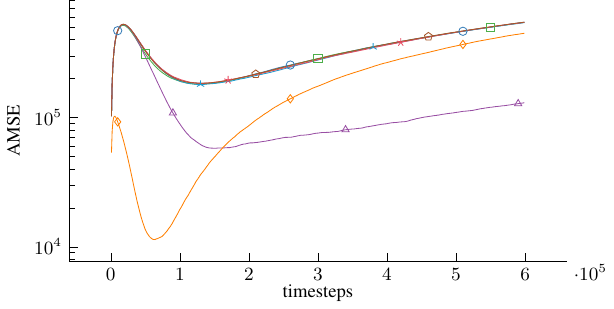}
        \subcaption{\label{fig:appendix_random_6}}
    \end{subfigure}
    \begin{subfigure}[c]{0.245\textwidth}
        \includegraphics[width=\textwidth]{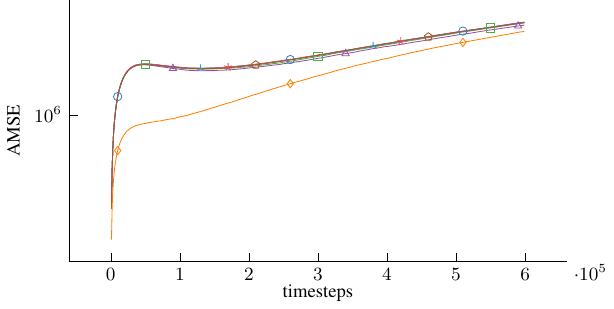}
        \subcaption{\label{fig:appendix_random_7}}
    \end{subfigure}    
    \begin{subfigure}[c]{0.245\textwidth}
        \includegraphics[width=\textwidth]{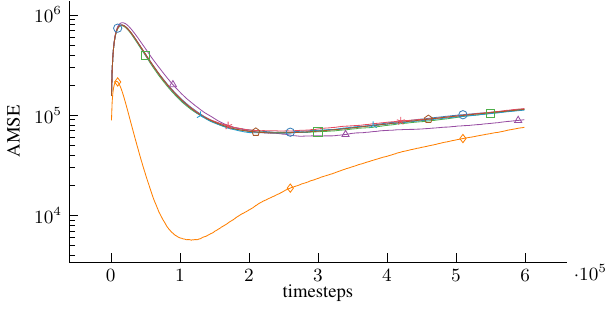}
        \subcaption{\label{fig:appendix_random_8}}
    \end{subfigure}
    \vskip\baselineskip
    \begin{subfigure}[c]{0.245\textwidth}
        \includegraphics[width=\textwidth]{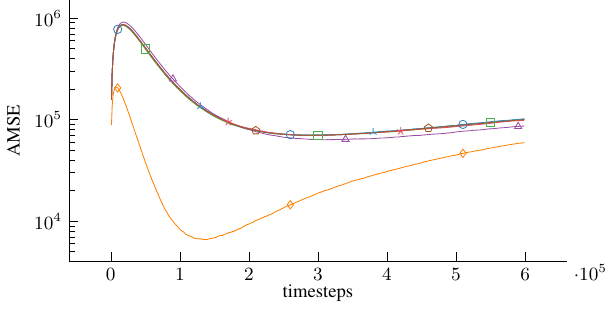}
        \subcaption{\label{fig:appendix_random_9}}
    \end{subfigure}
    \begin{subfigure}[c]{0.245\textwidth}
        \includegraphics[width=\textwidth]{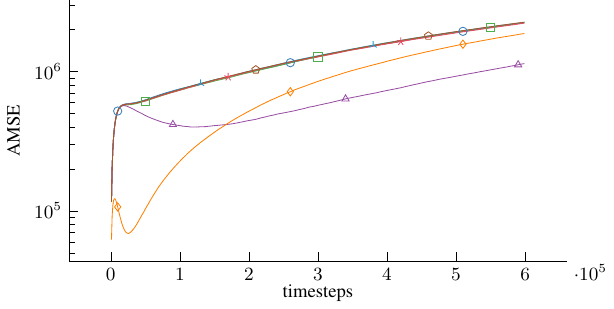}
        \subcaption{\label{fig:appendix_random_10}}
    \end{subfigure}
    \begin{subfigure}[c]{0.245\textwidth}
        \includegraphics[width=\textwidth]{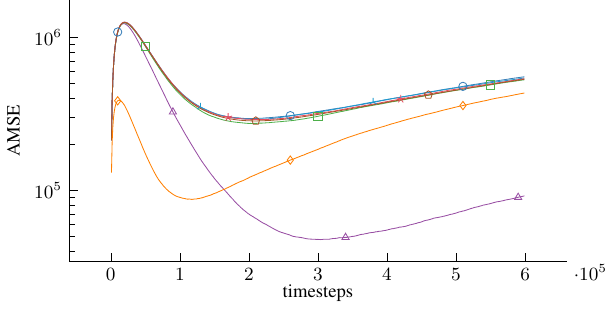}
        \subcaption{\label{fig:appendix_random_11}}
    \end{subfigure}    
    \begin{subfigure}[c]{0.245\textwidth}
        \includegraphics[width=\textwidth]{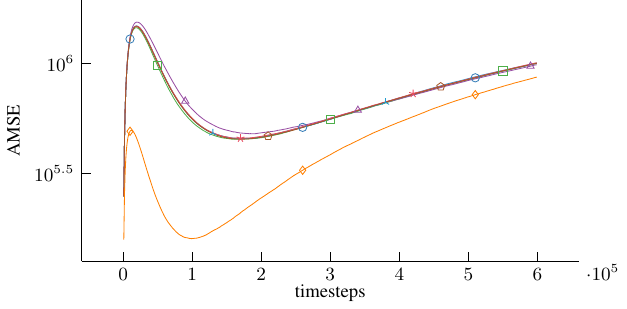}
        \subcaption{\label{fig:appendix_random_12}}
    \end{subfigure}
    \vskip\baselineskip
    \begin{subfigure}[c]{0.245\textwidth}
        \includegraphics[width=\textwidth]{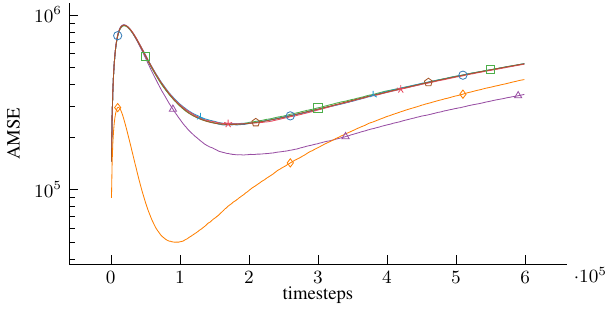}
        \subcaption{\label{fig:appendix_random_13}}
    \end{subfigure}
    \begin{subfigure}[c]{0.245\textwidth}
        \includegraphics[width=\textwidth]{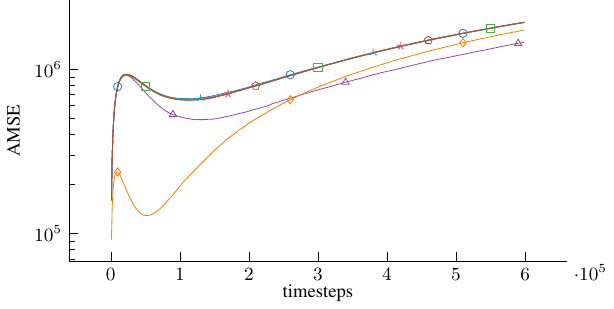}
        \subcaption{\label{fig:appendix_random_14}}
    \end{subfigure}
    \begin{subfigure}[c]{0.245\textwidth}
        \includegraphics[width=\textwidth]{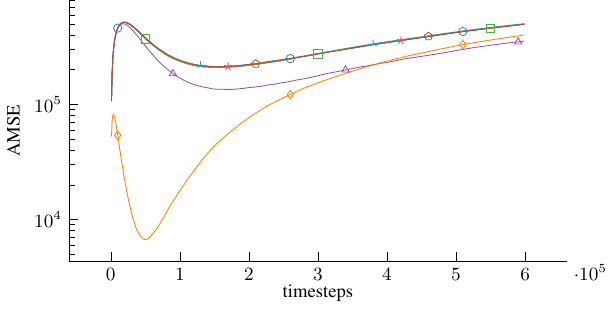}
        \subcaption{\label{fig:appendix_random_15}}
    \end{subfigure}    
    \begin{subfigure}[c]{0.245\textwidth}
        \includegraphics[width=\textwidth]{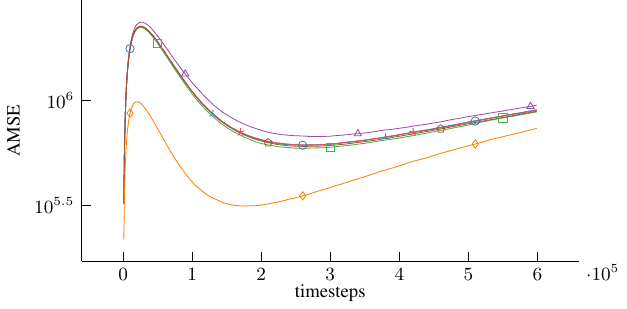}
        \subcaption{\label{fig:appendix_random_16}}
    \end{subfigure}
    \vskip\baselineskip
    \begin{subfigure}[c]{0.245\textwidth}
        \includegraphics[width=\textwidth]{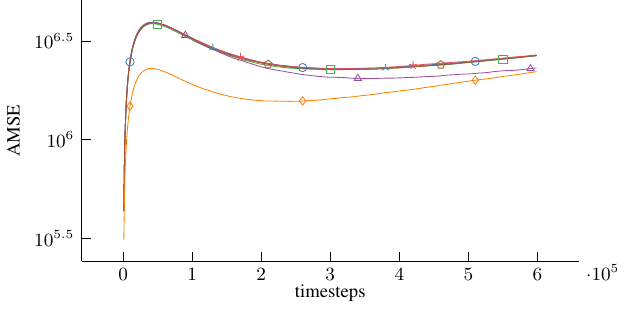}
        \subcaption{\label{fig:appendix_random_17}}
    \end{subfigure}
    \begin{subfigure}[c]{0.245\textwidth}
        \includegraphics[width=\textwidth]{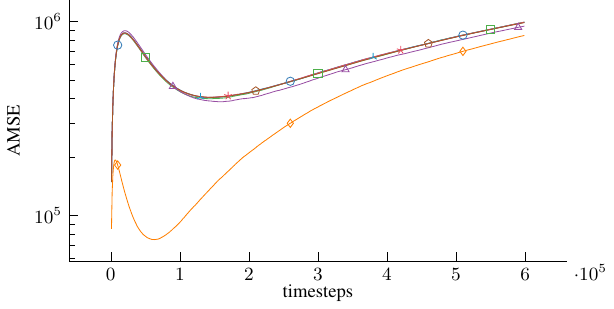}
        \subcaption{\label{fig:appendix_random_18}}
    \end{subfigure}
    \begin{subfigure}[c]{0.245\textwidth}
        \includegraphics[width=\textwidth]{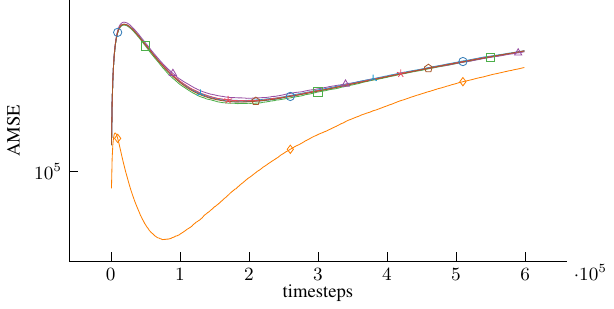}
        \subcaption{\label{fig:appendix_random_19}}
    \end{subfigure}    
    \begin{subfigure}[c]{0.245\textwidth}
        \includegraphics[width=\textwidth]{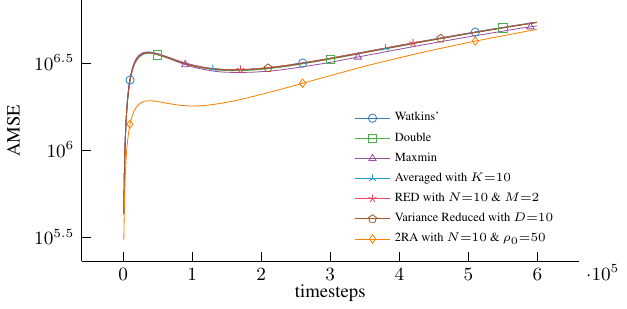}
        \subcaption{\label{fig:appendix_random_20}}
    \end{subfigure}
    \caption{Random Environment. All methods use an initial learning rate of $\alpha_0=0.01$, $w_{\alpha}=10^{5}$, $\gamma = 0.9$, and all $\theta^{(i)}$ initialized as zero. Maxmin as well as 2RA Q-learning have $N=10$ and 2RA agents additionally use $\rho_{0}=50$ and $w_{\rho}=10^{4}$. The plots show the first 20 randomly drawn environments.}
    \label{fig:appednix_random_env_figures}
\end{figure*}

\subsection{Neural Network Function Approximation} \label{app:ssec:neural:networks}

\begin{figure*}[t]
    \begin{subfigure}[b]{0.49\textwidth}
        \includegraphics[width=\textwidth]{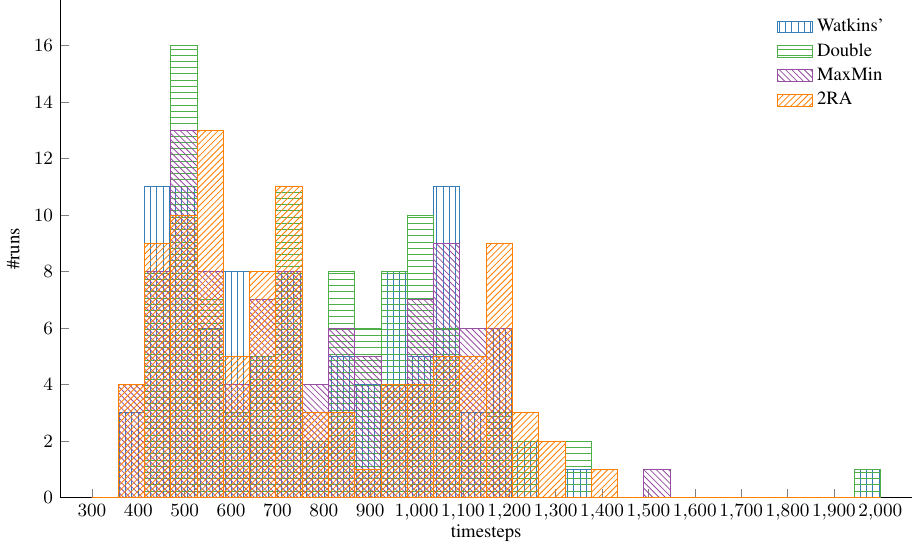}
        \subcaption{Distribution of hit times\label{fig:lunarlander_hist}}
    \end{subfigure}
    \hfill
    \begin{subfigure}[b]{0.49\textwidth}
        \centering
        \resizebox{0.75\textwidth}{!}{\begin{tabular}{ |l|c| } 
     \hline
     Algorithm & Mean hit time \\ 
     \hline
     Watkins' Q-Learning & $785 \pm 281.38$ \\ 
     Double Q-Learning & $791 \pm 268.64$ \\ 
     Maxmin Q-Learning & $770 \pm 252.19$ \\ 
     2RA Q-Learning & $767 \pm 276.24$ \\ 
     \hline
\end{tabular}
}\\
        \vspace*{1.5cm}
        \subcaption{Mean and std of hit times\label{fig:lunarlander_table}}
    \end{subfigure}
    \caption{LunarLander, 100 experiments. All methods use a learning rate of $\alpha=0.0002$ and a decay factor of $\gamma=0.99$. Maxmin, as well as 2RA Q-learning, have $N=5$. 2RA further uses $\rho_{0}=25$ and $w_{\rho}=10^{4}$. All algorithms are evaluated every $50$ episodes and recorded if the average evaluation reward reaches or exceeds 200. (a) Shows the distributions of each algorithm's hit times and (b) lists the respective mean hit times and corresponding standard deviations.}
    \label{fig:lunarlander_figures}
\end{figure*}

As an additional experiment, to test 2RA Q-learning when used with neural network Q-function approximation implemented in Tensorflow \citep{tensorflow2015-whitepaper}, the LunarLander environment \citep{gym2016} is chosen.
A lander receives a large positive reward for landing in a designated area, a large negative reward for crashing, and a small negative reward for firing a thruster.
Similar to the CartPole experiment, the latest updated policy with $\epsilon$-greedy exploration is used to generate the next timestep on which the model is updated since not all states may be reached by a random uniform policy. 
The environment is considered to be solved if the average reward over $100$ episodes, during evaluation, reaches or exceeds $200$.
Analogue to the CartPole experiment, different algorithms are compared based on how many training episodes are required to solve the LunarLander environment where fewer average timesteps, until the environment is solved, result in a higher performance ranking for the corresponding model.
Instead of the $\theta^{(i)}$ a small neural network with two hidden ReLU \citep{ref:He2015} layers and $N$ sets of weights are used.
Averaging operations that were performed on the $\theta^{i}$ in the linear function approximation scenarios are now performed on sets of weights for the neural network.
All models are trained with a Huber loss \citep{ref:huber1964} and the Adam optimizer \citep{ref:KingmaBa2015} using a learning rate of $\alpha_{0} = 0.0002$.
The different learning methods are implemented as plain and close to the theory as possible.
Updates are therefore applied after every timestep and on that single timestep.

Comparing the hit times shows only little difference between all learning methods with Maxmin and 2RA Q-learning leading the field by a small margin.
Future work could aim to implement and test the method with more contemporary training pipelines, such as the incorporation of experience replay etc., to analyse whether such optimizations amplify the differences in performance or just apply a uniform shift to all learning methods.

\end{document}